\documentclass[reqno,a4paper]{amsart}%%{ctexart}
\usepackage{amsmath,amstext,amssymb,amsfonts,amscd,amsthm}
\usepackage{geometry}
\usepackage{hyperref}
\hypersetup{pdfpagemode=FullScreen,colorlinks=true,linkcolor=blue,citecolor=blue,urlcolor=orange}
\usepackage{mathdots,mathrsfs,enumerate}
\usepackage{subfigure}
\usepackage{extarrows}
\usepackage{mathrsfs}  %% 数学RSFS 书写字体
\usepackage{dsfont}  %% 数学双击字体；像整数集，实数集符号，一般是需要的。
\usepackage{graphicx,color}
\usepackage{pgffor} %%可以使用\foreach,\breakforeach 的包 %%在 TikZ 单是\foreach不需要, 在一般的 LaTeX 需要
\usepackage{ifthen} %%可以使用\ifthenelse的包, 还能使用\whiledo
\usepackage{tikz}
\usetikzlibrary{3d,calc,patterns,graphs,arrows,intersections,through}
\usetikzlibrary{shapes,shapes.geometric}%%2024年2月8 日- 画多边形
\usetikzlibrary{decorations.text}%%将文字绑定到路径上
\usepackage{tikz-cd,array,diagbox}
\usepackage{framed}

\usepackage{pgfplots} %%\begin{axis}... \addplot3

\usepackage{ulem}%%给文字添加下划线、波浪线等样式
\usepackage{mathtools}%%右花括号\begin{rcases}

\usepackage{verbatim}

\numberwithin{equation}{section}
\newenvironment{proof*}{\noindent{\heiti 证明}}{\hfill\qed}%

\newtheorem{lemma}{Lemma}[section]
\newtheorem{theorem}{Theorem}[section]

\newtheorem{proposition}{Proposition}[section]

\newtheorem{remark}{Remark}[section]

\newcommand{\A}{\forall}

\newcommand{\p}{\partial}

\renewcommand{\t}{\triangle}

\newcommand{\abs}[1]{\left\vert#1\right\vert}

\newcommand{\set}[1]{\left\{#1\right\}}

\newcommand{\frb}[1]{\left(#1\right)}

\newcommand{\ol}{\overline}

\newcommand{\wt}{\widetilde}

\newcommand{\Z}{\mathbb Z}

\newcommand{\R}{\mathbb R}

\renewcommand{\a}{\alpha}
\renewcommand{\b}{\beta}
\newcommand{\g}{\gamma}
\newcommand{\G}{\Gamma}
\renewcommand{\d}{\delta}

\newcommand{\ve}{\varepsilon}

\renewcommand{\l}{\lambda}
\renewcommand{\L}{\Lambda}

\newcommand{\s}{\sigma}

\newcommand{\op}[1]{\operatorname{#1}} %%\operatorname
\allowdisplaybreaks[3]

\begin{document}

\title[A concavity inequality and interior $C^2$ estimate]
{A concavity inequality and interior $C^2$ estimate for Hessian quotient equations}

\author{Zhisu Li and Ke Wu}

\address{School of Mathematics, Northwest University}

\date{\today}

\maketitle
\tableofcontents

\begin{abstract}
We establish a concavity inequality
for the Hessian quotient operators $\dfrac{\s_k}{\s_l}$
in the cases $k-l\in\{1,2\}$,
and then derive the corresponding Jacobi inequality.
Combining this with the framework developed by Lu and Tsai \cite{LT26},
we obtain an interior Hessian estimate for convex solutions of
$\dfrac{\s_k(D^2u)}{\s_l(D^2u)}=f$.
%%
%%Together with the known singular examples for $k-l\geq3$,
%%this completes the interior $C^2$ regularity classification for convex solutions of Hessian quotient equations.
%%
As an application,
we prove that any entire convex solution in $\R^n$ with quadratic growth must be a quadratic polynomial.
\end{abstract}

Keywords:
Hessian quotient equation,
concavity inequality,
interior Hessian estimate,
Liouville theorem

2020 Mathematics Subject Classification:
35J60,\ 35B45,\ 35B53,\ 35B65,\ 35J15

\section{Introduction}

Interior Hessian estimates are a fundamental problem in the theory of fully nonlinear elliptic equations.
The study of such estimates can be traced back to Heinz \cite{H59},
who established an interior $C^2$ estimate for the two-dimensional Monge--Amp\`ere equation,
with later proofs given in \cite{CHO16,Liu21}.
However, Pogorelov \cite{P78} constructed singular solutions in
dimensions $n\geq3$, showing that such an estimate fails without further structure.
Later, Urbas \cite{U90} showed that an interior $C^2$ estimate also fails
for the Hessian equation $\s_k(D^2u)=f$ for $k\geq3$.
This leaves the quadratic Hessian equation $\s_2(D^2u)=f$ as the remaining case.

For $\s_2(D^2u)=1$ in dimension three,
Warren and Yuan \cite{WY09} obtained an interior Hessian estimate,
which was extended by Qiu \cite{Q24} to general positive right-hand sides.
In general dimensions,
interior Hessian estimates have been obtained under various structural assumptions.
McGonagle, Song and Yuan \cite{MSY19} and Shankar and Yuan \cite{SY20}
treated almost convex and semiconvex solutions, respectively, for $\s_2(D^2u)=1$,
while Mooney \cite{M21} considered convex viscosity solutions.
Guan and Qiu \cite{GQ19} obtained the estimate for $\s_2(D^2u)=f$ under the condition $\s_3(D^2u)\geq-A$.
In 2025, Shankar and Yuan \cite{SY25} proved the interior Hessian
estimate for $\s_2(D^2u)=1$ in dimension four
and obtained corresponding higher-dimensional estimates
under the dynamic semiconvexity condition $\l_{\min}(D^2u)\geq-c(n)\t u$,
while Fan \cite{Fan26} extended the dimension-four result to variable right-hand sides.
Most recently, Chen, Jian, Tu and Zhou \cite{CJTZ26}
established interior $C^2$ regularity for convex viscosity solutions of
$\s_2(D^2u)=f(x)$ in all dimensions, assuming only $f\in C^{0,1}$ and $\inf f>0$.

A closely related class is given by the Hessian quotient equations
\[
F(D^2u):=\frac{\s_k(D^2u)}{\s_l(D^2u)}=f(x,u)
\quad\text{for}\ 1\leq l<k\leq n.
\]
For $\s_3/\s_1$ in dimensions three and four,
interior Hessian estimates were obtained by exploiting the special Lagrangian structure;
see \cite{CWY09,WY14,Z24}.
Lu \cite{Lu23} later obtained a Jacobi-inequality proof in dimension three.
Using a concavity inequality from \cite{GS26},
Lu \cite{Lu25} further combined the Jacobi inequality with the Legendre transform
to obtain interior estimates
for the positive quotients $\s_n/\s_{n-1}$ and $\s_n/\s_{n-2}$ in general dimensions.
In the same work,
he also constructed singular solutions for the general quotient equation when $l\leq k-3$.
Jiao and Sui \cite{JS26} treated $\s_2/\s_1$ for $2$-convex solutions in
dimension three and for semiconvex solutions in higher dimensions.
More recently, Mei and Yan \cite{MY26} established estimates
for semiconvex solutions of $\s_3/\s_l=1$ with $l=1,2$ in arbitrary dimensions,
whereas Fung \cite{F26} developed a pointwise doubling method for adjacent
quotients with gradient-dependent right-hand sides.
For general adjacent quotients, however,
Fung's argument still relies on a Lu--Tsai-type structural concavity condition.

In view of the preceding results,
the unresolved cases for convex solutions of the general quotient equation
are $l=k-1$ and $l=k-2$.
Lu and Tsai \cite{LT26} obtained the interior Hessian estimate
in these cases under \cite[Assumption~1.1]{LT26}.
Their proof first uses this assumption to derive the Jacobi inequality
for $b=\log\l_{\max}(D^2u)$, see \cite[Lemma~2.5]{LT26},
and then converts it into the Hessian estimate
through the Legendre transform, the mean value inequality,
and integration by parts in \cite[Sections~3 and~4]{LT26}.
Thus, the remaining issue in their framework is the algebraic concavity inequality
required in the largest eigenvalue calculation.

The assumption in \cite{LT26} is imposed for every $\xi\in\R^n$,
whereas the Jacobi calculation only requires a restricted class of directions.
Indeed, after diagonalizing $D^2u$ at a point
where the largest eigenvalue has multiplicity $m$,
the multiple eigenvalue derivative formula gives
\[
u_{111}=u_{221}=\cdots=u_{mm1}.
\]
Hence the vector $\xi=(u_{ii1})$ arising in the Jacobi calculation
satisfies $\xi_1=\xi_2=\cdots=\xi_m$.
It is therefore sufficient to establish the required concavity inequality
for this restricted class of directions.
When $k-l=1$, $F$ itself is the adjacent quotient $\s_k/\s_{k-1}$.
When $k-l=2$, the key structural observation is the one-dimensional lifting
\[
\L=\frb{\l,\sqrt{F(\l)}}\in\R^{n+1},
\quad
\frac{\s_k(\L)}{\s_{k-1}(\L)}=\sqrt{F(\l)},
\]
which reduces the problem to an adjacent quotient in one higher dimension.
This leads to the following theorem.

\begin{theorem}\label{thm.quotient-concavity}
Let $1\leq l<k\leq n$, $k-l\in\set{1,2}$ and
$0<\d<1/2$.
There exist constants
$A=A(n,k,l,\d)\geq2$ and $K=K(n,k,l,\d)\geq1$
such that, whenever
\[
\l_1=\l_2=\cdots=\l_m=:\l_{\max}>
\l_{m+1}\geq\cdots\geq\l_n\geq0,
\quad
\s_k(\l)>0
\]
for some $m\in\Z_{[1,n]}$, and
\[
\l_{\max}\geq AF(\l)^{1/(k-l)},
\]
we have
\begin{equation}\label{eqn.quotient-concavity}
-\sum_{i,j=1}^nF_{ij}\xi_i\xi_j
+\frac{K}{F}\frb{\sum_{i=1}^nF_i\xi_i}^2
+\sum_{i>m}\frac{2F_i\xi_i^2}{\l_{\max}-\l_i}
\geq
(1+\d)\frac{F_1\xi_1^2}{\l_{\max}}
\end{equation}
for every $\xi\in\R^n$ satisfying
$\xi_1=\xi_2=\cdots=\xi_m$,
where $F_i$ and $F_{ij}$ denote derivatives
with respect to the eigenvalue variables.
When $m=n$,
the terms involving $\l_{m+1}$ and the sums over $i>m$
are understood to be omitted.
\end{theorem}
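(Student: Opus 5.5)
We first prove the inequality for the adjacent quotient $F=\s_k/\s_{k-1}$, and then deduce the case $k-l=2$ through the lifting $\L=(\l,\sqrt{F(\l)})$.

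\textbf{Adjacent case $k-l=1$.} We use the standard expansion of $\s_k/\s_{k-1}$ at a positive vector. With the quadratic form
\[
Q(\xi):=-\sum_{i,j}F_{ij}\xi_i\xi_j ,
\]
the known formula (going back to the concavity proof of $\s_k/\s_{k-1}$) writes $Q$ as a sum of nonnegative terms. The cleanest version writes $F$ as a function of $\l$ via
\[
\frac{\s_k}{\s_{k-1}}(\l)=\l_1\frac{\s_{k-1}(\l|1)}{\s_{k-1}(\l)}+\frac{\s_k(\l|1)}{\s_{k-1}(\l)},
\]
and isolates the contribution of the large variables.

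The key intermediate claim is the following. When $\l_{\max}\geq A F$, the $m$ largest eigenvalues dominate, and
\[
F_1\approx \frac{\s_{k-1}(\l|1)\,\s_{k-2}(\l|1)\cdots}{\s_{k-1}^2}
\]
is of size $O\bigl((F/\l_{\max})^{2}\bigr)$ times a lower-order quotient. Hence the right-hand side $(1+\d)F_1\xi_1^2/\l_{\max}$ is small. We aim to show
\[
Q(\xi)\geq (1+\d)\frac{F_1\xi_1^2}{\l_{\max}}-\frac{K}{F}\Bigl(\sum_i F_i\xi_i\Bigr)^2-\sum_{i>m}\frac{2F_i\xi_i^2}{\l_{\max}-\l_i}.
\]

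To get this, split $\xi=\xi_1(e_1+\dots+e_m)+\eta$ with $\eta$ supported on $i>m$.
\begin{itemize}
\item For the pure $\xi_1$ part, use that $F$ is concave and homogeneous of degree one along the ray. Moving all of $\l_1,\dots,\l_m$ together yields a one-variable function $g(t)=F(t,\dots,t,\l')$. Its second derivative satisfies $-g''\geq c\, g'/t$, with $c$ close to $2$ when $t\gg F$. The reason is that $g(t)\to \s_{k-m}$-type limits behave like $a-b/t$, which gives $-g''/g'\approx 2/t$. This extra factor $2>1+\d$ is precisely what yields the constant $1+\d$ with $\d<1/2$ after absorbing errors. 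We need $m$ copies of $F_1$, so we must check $g'=mF_1$ against the right-hand side $F_1\xi_1^2/\l_{\max}$; the factor $m\geq1$ only helps.
\item For the cross terms $\xi_1\eta_i$, use Cauchy--Schwarz against the good term $2F_i\xi_i^2/(\l_{\max}-\l_i)$ and against $K(\sum F_i\xi_i)^2/F$. In the regime $\l_i\leq\l_{\max}$, $-F_{1i}$ is comparable to $F_i/\l_{\max}$-type quantities, and the mixed terms are controlled by $(1-\d')$ of the diagonal terms.
\end{itemize}

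\textbf{Main obstacle.} The main obstacle is the sharp constant: showing that the mixed Hessian entries $F_{1i}$, $i>m$, satisfy
\[
F_{1i}^2\leq (1-\epsilon)\cdot\frac{2F_i}{\l_{\max}-\l_i}\cdot\Bigl(-g''-\frac{(1+\d)g'}{\l_{\max}}\Bigr)
\]
up to the $K$-term. We would try to verify this by explicit formulas for $\s_k/\s_{k-1}$ in the variables $(\l_1,\l_i)$ with the rest frozen. That is, write
\[
F=\frac{a\l_1\l_i+b\l_1+c\l_i+d}{e\l_1\l_i+\dots},
\]
a bilinear fractional function, where everything reduces to a $2\times2$ determinant check. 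The $K$-term is used to kill the direction in which $\sum F_i\xi_i$ is large, via the identity $\sum F_i\l_i=F$.

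\textbf{Case $k-l=2$.} Set $\L=(\l,\mu)$ with $\mu=\sqrt{F(\l)}$, and let $G=\s_k/\s_{k-1}$ on $\R^{n+1}$, so that $G(\L)=\mu$ and $F(\l)=\mu^2$ on the graph. Differentiating $G(\l,\mu(\l))=\mu(\l)$ implicitly expresses $F_i$ and $F_{ij}$ through $G_i,G_{ij}$ and $G_\mu$, with $1-G_\mu>0$. Apply the adjacent inequality in dimension $n+1$ to the vector $(\xi,\zeta)$ with $\zeta=\sum_i\mu_i\xi_i$. The new entry $\mu$ is not among the largest, since $\l_{\max}\geq A\mu$, so it is a free index. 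Its good term $2G_\mu\zeta^2/(\l_{\max}-\mu)$ and the $K$-term absorb everything involving $\zeta$. The hypothesis $\l_{\max}\geq AF^{1/2}$ matches $\l_{\max}\geq AG$, and translating back gives the claim with adjusted $K$.
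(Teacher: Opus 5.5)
\textbf{The reduction for $k-l=2$ does not give a uniform $K$.} Write $G=\sigma_k/\sigma_{k-1}$ on $\mathbb{R}^{n+1}$, $\mu=\sqrt F$, $\Xi=(\xi,\partial_\xi\mu)$ and $c:=1-G_\mu(\Lambda)$. Differentiating $G(\lambda,\mu(\lambda))=\mu(\lambda)$ gives $G_i=c\,F_i/(2\sqrt F)$ for $i\le n$ and $\partial_\Xi^2G=c\,\partial_\xi^2\mu$. So the Hessian term, the good terms with $i\le n$, and the right-hand side all carry the factor $c/(2\sqrt F)$. But $\partial_\Xi G=\partial_\xi\mu$ carries no factor $c$. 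After dividing by $c/(2\sqrt F)$, the gradient term $\frac{K}{G}(\partial_\Xi G)^2$ becomes $\frac{K}{2c}\frac{(\partial_\xi F)^2}{F}$.

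Now $c=2\sqrt F/(q_{k-1}(\lambda)+\sqrt F)$, and $q_{k-1}(\lambda)$ can be comparable to $\lambda_{\max}$ (take $k-1$ eigenvalues of size $\lambda_{\max}$ and the rest tiny). Then $1/c$ is of order $\lambda_{\max}/\sqrt F$, which is unbounded. So the adjacent inequality in the form you state only yields the $k-l=2$ case with $K$ replaced by $K\lambda_{\max}/\sqrt F$. That would still suffice for the Jacobi inequality, but it is not the theorem.

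The missing idea is a stronger adjacent inequality. Its gradient term should be only $Kc\,(\partial_\xi q/q)^2$ for an arbitrary $c\ge a\,q(\lambda)/\lambda_{\max}$, i.e.\ smaller by a factor of order $q/\lambda_{\max}$. This is Lemma~\ref{lem.convave-pro}, which the paper applies with $c=1-\partial_{n+1}q^*(\Lambda)\ge\frac{2(k-1)}{n+1}\frac{\sqrt F}{\lambda_{\max}}$. The same lower bound on $c$ is what lets the index-$(n+1)$ good term be absorbed.

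\textbf{The adjacent case is not proved, and a pairwise $2\times2$ check cannot prove it.} The cross terms $F_{1i}\xi_1\xi_i$, $i>m$, are of the same order as the right-hand side, and the gradient term is needed at leading order. For example, take $F=\sigma_2/\sigma_1$ on $\mathbb{R}^2$, $\lambda=(t,1)$ and $\xi=(1,\frac1{2t})$. Then $-\partial_\xi^2F+\frac{2F_2\xi_2^2}{t-1}=\frac{t}{(t+1)^2(t^2-1)}<(1+\delta)\frac{F_1}{t}$ for large $t$.

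More generally, normalize $q=1$ and $\xi_1=1$. Consider the regime where $k-1$ eigenvalues diverge and the remaining $d$ converge to $y$ with $\sigma_1(y)=1$, and write $\xi_i=v_i/\lambda_{\max}$ on those indices. At order $\lambda_{\max}^{-3}$, the left-hand side without the gradient term is bounded below by $2\alpha-2\sum_i(1+y_i)v_i+2|v|^2$, where $\alpha=\frac{1+|y|^2}{2}$, and the right-hand side is $(1+\delta)\alpha$. Estimating each cross term against its own good term leaves only $\frac12(|y|^2-d)\le0$. What saves the estimate is the global constraint $\sum_iv_i=0$, which is exactly what the $c$-weighted gradient term yields in the limit. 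Completing the square then gives $\frac32\alpha$. So the mixed terms eat part of the factor $2$ at leading order; this, not error absorption, is why $\delta<\frac12$ is needed.

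The paper handles this by contradiction and compactness, which also gives the uniformity your $g\approx a-b/t$ heuristic lacks when several eigenvalues diverge at different rates. It takes $A=K=\nu\to\infty$ and uses reciprocal variables $x_i=1/\lambda_i$ for the diverging components, so that $q=\Phi(x,y)$ is smooth at $(0,y)$. It then extracts a limit $v$ with $DQ_p(y)\cdot v=0$ and splits into two cases. For $p\ge2$, $\ker D^2Q_p(y)=\mathbb{R}y$ forces $v=0$, giving ratio $2$. For $p=1$, it uses the completion of the square above.
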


\begin{remark}
For $k=n$, Guan and Sroka \cite{GS26} established a stronger concavity inequality
for positive Hessian quotient operators, which holds for arbitrary $\xi\in\R^n$.
For $l=0$, so that $F=\s_k$ is the $k$-Hessian operator,
Zhang \cite{Z25} obtained the corresponding inequality under a semiconvexity assumption
when $\l_{\max}$ is sufficiently large,
again for arbitrary $\xi\in\R^n$.
\end{remark}

\begin{remark}
After the completion of this work,
we became aware of a recent preprint by Tsai \cite{T26},
who independently proved \cite[Assumption~1.1]{LT26} for $\s_k/\s_{k-1}$
using a change of basis for symmetric polynomials.
\end{remark}

Theorem \ref{thm.quotient-concavity}
implies the Jacobi inequality required in the framework of \cite{LT26}.
Following the arguments in Sections~3 and~4 of \cite{LT26},
we obtain the following interior Hessian estimate.

\begin{theorem}\label{thm.main}
Let $n\geq2$, $1\leq l<k\leq n$ with
$k-l\in\{1,2\}$.
Let $f\in C^{1,1}(B_{10}\times\R)$ be a positive function.
Suppose that $u\in C^4(B_{10})$ is a convex solution of
\[
F(D^2u)
=\frac{\s_k(D^2u)}{\s_l(D^2u)}
=f(x,u)
\quad\mathrm{in}\ B_{10}.
\]
Then we have
\[
|D^2u(0)|\leq C,
\]
where $C$ is a positive constant depending only on $n$, $k$, $l$,
$\|u\|_{C^{0,1}\frb{\ol{B_9}}}$,
$\min_{\ol{B_9}\times[-M,M]}f$
and $\|f\|_{C^{1,1}\frb{\ol{B_9}\times[-M,M]}}$,
and $M$ is a large constant satisfying $\|u\|_{C^{0,1}\frb{\ol{B_9}}}\leq M$.
\end{theorem}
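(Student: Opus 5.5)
The plan is to derive the Jacobi inequality for $b=\log\l_{\max}(D^2u)$ from Theorem \ref{thm.quotient-concavity}, and then feed it into the Legendre transform and integral machinery of \cite[Sections~3 and~4]{LT26} unchanged. Fix a point $x_0$ where $\l_{\max}\geq AF^{1/(k-l)}$. Since $u$ is convex and $F=f\geq\min f>0$, the lower bound $\l_{\max}\geq c(n,k,l)F^{1/(k-l)}$ holds automatically, so the complementary regime where $\l_{\max}$ is bounded by $Af^{1/(k-l)}\leq C$ is trivial.

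\textbf{Step 1: the Jacobi inequality for $b$.} At $x_0$ we diagonalize $D^2u$ with $\l_1=\cdots=\l_m>\l_{m+1}$. Since $b$ is only Lipschitz at points of multiplicity $m>1$, we work with a smooth upper barrier or in the viscosity sense. Differentiating the equation once gives $\sum_i F_iu_{ii\g}=f_\g$. Differentiating twice along $e_1$ gives
\[
\sum_i F_iu_{ii11}+\sum_{i,j}F_{ij}u_{ii1}u_{jj1}+\sum_{i\neq j}\frac{F_i-F_j}{\l_i-\l_j}u_{ij1}^2=f_{11}.
\]
We then use the multiple-eigenvalue second-variation formula $\p_{\g\g}\l_1\ge u_{11\g\g}+2\sum_{i>m}u_{1i\g}^2/(\l_1-\l_i)$ and commute derivatives, $u_{11\g\g}=u_{\g\g11}$. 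This produces $\sum_\g F_\g b_{\g\g}$ in terms of three groups: $-F_{ij}\xi_i\xi_j$ with $\xi_i=u_{ii1}$, cross terms, and $-F_\g b_\g^2$. By the observation preceding Theorem \ref{thm.quotient-concavity}, $\xi$ satisfies $\xi_1=\cdots=\xi_m$. The term $\frac KF(\sum F_i\xi_i)^2=\frac KF f_1^2$ is controlled by $C\sum F_i$. The terms with $\g=1$, $b_1^2=\xi_1^2/\l_1^2$, are absorbed using \eqref{eqn.quotient-concavity} with the extra factor $\d$. The terms with $\g>m$, $b_\g^2=u_{11\g}^2/\l_1^2$, are handled by the standard comparison of $F_\g u_{11\g}^2/\l_1$ against $2F_\g u_{11\g}^2/(\l_1-\l_\g)$ and the $\frac{F_1-F_\g}{\l_1-\l_\g}$ terms, exactly as in \cite[Lemma~2.5]{LT26}. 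The outcome is
\[
\sum_\g F_\g b_{\g\g}\geq \ve\sum_\g F_\g b_\g^2 - C\sum_\g F_\g,
\]
for some $\ve=\ve(\d)>0$, with $C$ depending on the stated data through $f$ and $\|u\|_{C^{0,1}}$. The $f_u$ terms bring in $u_\g$, which is controlled by the Lipschitz norm.

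\textbf{Step 2: the integral argument.} This inequality is precisely the input \cite[Lemma~2.5]{LT26} that Sections~3 and~4 of \cite{LT26} consume. There, one passes to the Legendre transform, where the linearized operator becomes uniformly elliptic in a suitable sense for convex solutions. Then a mean value inequality for the subsolution $b$ and integration by parts to bound $\int b$ yield $b(0)\leq C$. We will verify that \cite[Assumption~1.1]{LT26} was used only through Lemma~2.5, so the rest carries over verbatim.

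The main obstacle is Step 1: matching each term of \eqref{eqn.quotient-concavity} to the Jacobi computation. The delicate points are the $(1+\d)$ slack, the non-smoothness of $\l_{\max}$ when $m>1$, and checking that the rescaled threshold $\l_{\max}\geq AF^{1/(k-l)}$ is compatible with the regions where the \cite{LT26} argument is applied.
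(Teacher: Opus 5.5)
Your plan is the paper's proof. The paper obtains the Jacobi inequality for $b=\log\l_{\max}$ from Theorem~\ref{thm.quotient-concavity} through the same multiple-eigenvalue computation you outline (Proposition~\ref{thm.jacobi}), then feeds it into the Legendre transform, local maximum principle and integration-by-parts arguments of \cite[Sections~3--4]{LT26}. Two small corrections. First, your computation actually yields the error $-C$ (since $f_{11}/\l_1\geq -C$ and $Kf_1^2/(F\l_1)\leq C$); keep that form, as the paper does, rather than weakening it to $-C\sum_\g F_\g$, which is unbounded when $k-l=2$. Second, the paper resolves the threshold issue you flag by replacing $b$ with $\log\max\{\l_{\max},\ol\l\}$, $\ol\l=\max\{1,A(\max f)^{1/(k-l)}\}$: this is a viscosity subsolution on all of $B_9$ and has $Db=0$ a.e.\ on the truncated set.
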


Together with the known singular examples for $l\leq k-3$,
Theorem \ref{thm.main} completes the interior $C^2$ regularity classification
for convex solutions of Hessian quotient equations.

Interior Hessian estimates are closely related to Liouville theorems
for entire solutions with quadratic growth.
For Hessian quotient equations,
Bao, Chen, Guan and Ji \cite{BCGJ03}
proved such a theorem for entire convex solutions of the top Hessian quotients.
More recently, Lu and Sroka \cite{LS26}
established a Liouville theorem for semiconvex entire solutions of $\s_2/\s_1=1$.
As an application of Theorem \ref{thm.main},
we extend this rigidity phenomenon to the Hessian quotient equations
with $k-l\in\set{1,2}$ considered in this paper.

\begin{theorem}\label{thm.liouville}
Let $n\geq2$, $1\leq l<k\leq n$ with $k-l\in\{1,2\}$.
Suppose that $u\in C^4(\R^n)$ is a convex solution of
\[
\frac{\s_k(D^2u)}{\s_l(D^2u)}=1
\quad\mathrm{in}\ \R^n.
\]
Assume that there exist constants $a>0$, $c\geq0$, and $R>0$ such that
\[
u(x)\geq a|x|^2-c
\quad\text{for all}\ |x|\geq R.
\]
Then $u$ is a quadratic polynomial.
\end{theorem}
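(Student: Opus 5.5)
The plan is to obtain the Liouville theorem from Theorem \ref{thm.main} by rescaling, in the standard way. Since $u$ grows quadratically, we must first control $u$ from above and its gradient so that the rescaled functions have uniform Lipschitz bounds.

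For $R'$ large set
\[
u_{R'}(y)=\frac{u(R'y)}{R'^2},\qquad y\in B_{10}.
\]
Then $D^2u_{R'}(y)=D^2u(R'y)$, so $u_{R'}$ is convex and solves $\s_k/\s_l=1$. If we can bound $\|u_{R'}\|_{C^{0,1}(\ol{B_9})}$ independently of $R'$, Theorem \ref{thm.main} with $f\equiv1$ gives $|D^2u(R'y)|\le C$ at every point $y$ (after translating the center), uniformly in $R'$. Hence $D^2u$ is bounded on all of $\R^n$.

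\textbf{Main obstacle: the upper bound on $u$.} The hypothesis only bounds $u$ from below, so the upper bound must come from the equation. I would use the sections $S_h=\{u<\ell+h\}$ with $\ell$ a supporting affine function at the minimum point; the lower bound $u\ge a|x|^2-c$ makes these sections bounded. The key ingredient is the elementary inequality
\[
\frac{\s_k(\l)}{\s_l(\l)}\ge c(n,k,l)\,\s_n(\l)^{(k-l)/n}\quad\text{for }\l\ge0,
\]
since $\s_k/\s_l\ge c\,(\s_n)^{(k-l)/n}$ by Maclaurin. So $\det D^2u\le C$ and $u$ is a convex Monge--Amp\`ere subsolution-type function with bounded Monge--Amp\`ere measure. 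We also have $\t u\ge c>0$, because $\s_k/\s_l\le C\s_1^{k-l}$.

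To get the upper bound I would argue via John's lemma on $S_h$. Suppose $S_h$ is very thin in some direction. Comparing with the quadratic $\frac{\ve}{2}|x|^2$ is not immediate, but for convex $\l$ with $\s_k/\s_l=1$ the quantity $\s_1$ is bounded below, and this forces $|S_h|\lesssim h^{n/2}$ via the Aleksandrov-type estimate for the Laplacian. Combined with the lower bound $S_h\subset B_{C\sqrt{h}}$, it should also give $S_h\supset B_{c\sqrt h}$. An alternative that avoids this geometric step is to compare with the Liouville argument of \cite{BCGJ03}, which handles exactly this issue for quotients. Either way, this yields $u(x)\le C(1+|x|^2)$, and convexity then gives $|Du(x)|\le C(1+|x|)$. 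These are precisely the uniform $C^{0,1}$ bounds on $u_{R'}$ over $\ol{B_9}$ needed above.

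With $|D^2u|\le C$ on $\R^n$, the equation is uniformly elliptic along $u$. The operator $(\s_k/\s_l)^{1/(k-l)}$ is concave, so the Evans--Krylov estimate applied to $u_{R'}$ gives
\[
[D^2u_{R'}]_{C^\a(B_1)}\le C,
\]
which translates to $[D^2u]_{C^\a(B_{R'})}\le C R'^{-\a}$. Letting $R'\to\infty$ shows that $D^2u$ is constant, so $u$ is a quadratic polynomial.
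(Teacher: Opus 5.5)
\textbf{Gap: the upper bound on $u$.} Your reduction depends on $u\le C(1+|x|^2)$, and the argument you give for it fails.

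The inequality $\s_k/\s_l\ge c\,\s_n^{(k-l)/n}$ is false once $l\ge1$; Maclaurin gives it only for $l=0$. For $n=k=2$, $l=1$, the vector $\l=(t,\,t/(t-1))$ satisfies $\s_2(\l)/\s_1(\l)=1$, while $\s_2(\l)=t^2/(t-1)\to\infty$. So the equation gives no bound on $\det D^2u$, and the section argument has nothing to work with.

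The rest of your reasoning does not close the gap either. $\t u\ge c$ only says $u$ is large, i.e.\ sections are small. That is the same direction as the hypothesis $u\ge a|x|^2-c$. An upper bound on $|S_h|$ together with $S_h\subset B_{C\sqrt h}$ cannot give $S_h\supset B_{c\sqrt h}$. That inclusion is equivalent to the upper bound you want, and it would need an upper bound on some Monge--Amp\`ere-type measure. Citing \cite{BCGJ03} does not supply this step; that paper itself proceeds through the Legendre transform.

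Your final steps (rescaling, Theorem \ref{thm.main}, uniform ellipticity from a bounded Hessian, Evans--Krylov) would be fine if the upper bound were available. As written, it is not.

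\textbf{Missing idea: pass to the dual function.} There the hypothesis becomes exactly the bound you need, and the paper never needs an upper bound on $u$ itself.

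First show $D^2u>0$ everywhere. The lower bound makes $u-p\cdot x$ coercive for every $p$, so $Du$ is onto. The Bian--Guan constant rank theorem \cite{BG09} applies to $(\s_k/\s_l)^{1/(k-l)}(D^2u)=1$, because $A\mapsto L(A^{-1})$ is locally convex. A constant rank below $n$ would contradict surjectivity of $Du$ by Sard's theorem.

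After normalizing $u(0)=0$, $Du(0)=0$, the Legendre transform $u^*$ is an entire convex solution of $\s_{n-l}(D^2u^*)/\s_{n-k}(D^2u^*)=1$. This is a quotient with the same gap $k-l$. It also satisfies $0\le u^*(y)\le c+|y|^2/(4a)$. Hence rescalings of $u^*$ have uniform $C^{0,1}$ bounds, and Theorem \ref{thm.main} gives $|D^2u^*|\le C$. When $n=k$, the dual equation is $\t u^*=1$ or $\s_2(D^2u^*)=1$, handled by convexity or by \cite{M21}.

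Uniform ellipticity along $u^*$ follows from a lower bound on $\mu_{n-l}$ via Lemma \ref{lem.lambda-k-bounded}. Evans--Krylov applied to $u^*$ then shows $D^2u^*$ is constant.
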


The paper is organized as follows.
In Section \ref{sec.convave},
we first collect several properties of the adjacent quotient $\s_k/\s_{k-1}$
and then prove the key quantitative concavity inequality (Lemma \ref{lem.convave-pro}).
Using this inequality,
we establish Theorem \ref{thm.quotient-concavity}.
In Section \ref{sec.Jocabi},
we derive the corresponding Jacobi inequality
and, following the framework of Lu and Tsai \cite{LT26},
complete the proof of Theorem \ref{thm.main}.
Finally, in Section \ref{sec.Liouville},
we combine the constant-rank theorem, the Legendre transform,
Theorem \ref{thm.main}, and the Evans--Krylov estimate
to prove Theorem \ref{thm.liouville}.

\medskip

Throughout this paper, $C$ denotes a positive constant
which may change from line to line.
Its dependence on the fixed parameters in the context will usually be omitted,
while any additional dependence will be specified explicitly.

\section{A concavity inequality for Hessian quotient operators}\label{sec.convave}

In this section,
we aim to establish the concavity inequality in Theorem \ref{thm.quotient-concavity}.
The main step is to prove a quantitative inequality
for the adjacent quotient $\s_k/\s_{k-1}$ in Lemma \ref{lem.convave-pro},
whose proof is based on a contradiction argument.
We begin with some basic notation.
\medskip

For $n\geq1$ and $\l=(\l_1,\l_2,\dots,\l_n)\in\R^n$,
denote by $\l|i$ the vector obtained from $\l$
by deleting its $i$-th component.
Let $\s_i(\l)$ denote the $i$-th elementary symmetric function,
with the conventions $\s_0(\l):=1$, $\s_i(\l):=0$ if $i<0$ or $i>n$.
We write
\[
\G_+^n:=(0,\infty)^n,\
q_i^{(n)}(\l)
:=\frac{\s_i(\l)}{\s_{i-1}(\l)}
\quad\text{for all}\
i\in\Z_{[1,n]}.
\]
When the dimension is clear, we simply write $q_i$.
In particular, $q_1=\s_1$.

Next, we collect three properties of $q_k$,
which will be used in the proof of Lemma \ref{lem.convave-pro}.
The first shows that $q_k$ is concave
and that $\R y$ is the only null direction of its Hessian.

\begin{lemma}\label{lem.strict-concavity}
Let $2\leq k\leq n$ and
$y\in\ol{\G_+^n}$ satisfy $\s_k(y)>0$.
Then
\[
Dq_k(y)\cdot y=q_k(y),
\quad
-D^2q_k(y)\geq0,
\quad
\ker D^2q_k(y)=\R y.
\]
\end{lemma}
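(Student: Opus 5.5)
The first identity is Euler's relation: $q_k$ is homogeneous of degree one, so $Dq_k(y)\cdot y=q_k(y)$. Differentiating $Dq_k(ty)=Dq_k(y)$ (homogeneity of degree zero) in $t$ at $t=1$ gives $D^2q_k(y)y=0$, hence $\R y\subset\ker D^2q_k(y)$.

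For $-D^2q_k(y)\ge0$, note that $y\in\ol{\G_+^n}$ with $\s_k(y)>0$ has at least $k$ positive entries. In particular $y\in\G_k$, where $\s_1,\dots,\s_k>0$. The concavity of $\s_k/\s_{k-1}$ on $\G_k$ is classical (Marcus--Lopes; see also Huisken--Sinestrari), and I would simply quote it. The real content is the reverse inclusion $\ker D^2q_k(y)\subset\R y$, which I would prove by induction on $k$ using an explicit sum-of-squares representation of $-D^2q_k$.

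\emph{Base case $k=2$.} Since $q_2=\frac{\s_1}{2}-\frac{|y|^2}{2\s_1}$ and $\s_1$ is linear,
\[
-D^2q_2(y)(\xi,\xi)=\frac{1}{\s_1(y)^3}\sum_{i}\bigl(\xi_i\s_1(y)-y_i\s_1(\xi)\bigr)^2 .
\]
This vanishes iff $\xi=\frac{\s_1(\xi)}{\s_1(y)}\,y$, that is, iff $\xi\in\R y$.

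\emph{Inductive step.} I would use the decomposition $\s_j(y)=y_i\s_{j-1}(y|i)+\s_j(y|i)$. Rather than differentiating twice directly, I would reproduce the standard inductive concavity proof, for instance the one showing $t\mapsto q_k(y+t\xi)$ is concave via $q_k=\s_1-\sum_i\frac{\s_{k-1}(y|i)^2}{\dots}$-type identities, or Huisken--Sinestrari's argument. That proof exhibits $-D^2q_k(y)(\xi,\xi)$ as a sum of nonnegative terms: terms of the form $-D^2q_{k-1}(y|i)(\xi|i,\xi|i)$ with positive weights, plus a perfect square $\bigl(\xi_i\,\cdot\,q_{k-1}(y|i)-\text{(linear term)}\bigr)^2$. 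If the form vanishes at $\xi$, every summand vanishes.

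For the indices $i$ such that $y|i$ still has $\s_{k-1}(y|i)>0$, the induction hypothesis gives $\xi|i\parallel y|i$. Since $n\ge k\ge3$ at this stage, such slices overlap in at least one coordinate where $y$ is positive. The proportionality constants must then agree, which forces $\xi\parallel y$.

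\emph{Main obstacle: the degenerate boundary.} The delicate part is the boundary of $\G_+^n$, where some $y_j=0$. Deleting a positive entry may kill $\s_{k-1}(y|i)$ when $y$ has exactly $k$ positive entries, and zero entries contribute nothing to the inductive slices. To handle zero coordinates, I would first compute the restriction of the form to directions $e_j$ with $y_j=0$. Concretely, I would show that $-D^2q_k(y)(\xi,\xi)$ contains a term comparable to $\xi_j^2\cdot\frac{\s_{k-1}(y|j)^2-\s_k(y|j)\s_{k-2}(y|j)}{\s_{k-1}^3}$. By Newton--Maclaurin (strict, since at least $k$ entries of $y|j$ are positive) this term is strictly positive, forcing $\xi_j=0$. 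The problem then reduces to the positive coordinates, where the exactly-$k$-positive case must be checked by hand. There $q_k$ restricted to those coordinates is $\bigl(\sum_{i=1}^k 1/y_i\bigr)^{-1}$, a harmonic-mean-type function whose Hessian kernel is $\R y$ by the strict Cauchy--Schwarz inequality. If the sum-of-squares bookkeeping becomes unwieldy, the fallback is the superadditivity $q_k(a+b)\ge q_k(a)+q_k(b)$ with its equality case, combined with a Taylor expansion along a null direction.
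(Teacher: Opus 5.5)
Your route differs from the paper's. The paper does no computation beyond Euler's identity. It quotes Theorem~2.5 of Huisken--Sinestrari both for concavity and for the equality case: $\p_v^2q_k(y)=0$ only if $v\in\R y$ or $y$ has exactly $n-k+1$ zero entries. It then notes that the second alternative contradicts $\s_k(y)>0$. You quote only concavity and re-derive the equality case by induction on $k$.

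This works, but you should write the identity down rather than gesture at it. From $\s_1\s_{k-1}-k\s_k=\sum_iy_i^2\s_{k-2}(y|i)$ and $\s_{k-1}(y)=y_i\s_{k-2}(y|i)+\s_{k-1}(y|i)$, one gets $kq_k(y)=\s_1(y)-\sum_i\frac{y_i^2}{y_i+z_i}$ with $z_i:=q_{k-1}(y|i)$. Differentiating twice,
\[
-kD^2q_k(y)(\xi,\xi)=\sum_{i=1}^n\Bigl[\frac{2(z_i\xi_i-y_i\zeta_i)^2}{(y_i+z_i)^3}+\frac{y_i^2}{(y_i+z_i)^2}\bigl(-D^2q_{k-1}(y|i)\bigr)(\xi|i,\xi|i)\Bigr],
\qquad \zeta_i:=Dq_{k-1}(y|i)\cdot(\xi|i).
\]
This is exactly the structure you describe. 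The slices with nonzero weight satisfy the lemma's hypotheses at level $k-1$, so the formula also proves concavity inductively. Starting from your (correct) sum-of-squares formula for $q_2$, your argument becomes fully self-contained. The paper's citation is shorter; your version shows explicitly where $\s_k(y)>0$ enters.

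Your ``main obstacle'' is, however, illusory, and the detour around it contains one unjustified step. If $y$ has exactly $k$ positive entries and $y_i>0$, then $y|i$ has $k-1$ positive entries. So $\s_{k-1}(y|i)>0$; it is $\s_k(y|i)$ that vanishes. Hence the induction hypothesis applies to every slice with $y_i>0$.

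These slices carry positive weight $y_i^2/(y_i+z_i)^2$, so a null vector satisfies $\xi|i=t_i\,y|i$ for every such $i$. There are at least $k\geq3$ positive indices, so any two slices share a positive coordinate and the $t_i$ coincide. Zero coordinates are then covered automatically, since $\xi_\ell=t\,y_\ell=0$. The separate treatment of zero coordinates and the harmonic-mean computation (which is itself correct) are therefore unnecessary.

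If you do keep the zero-coordinate step, your Newton--Maclaurin quantity is the diagonal entry $-\p_{jj}q_k(y)$. Its positivity alone does not force $\xi_j=0$ for a null vector of a positive semidefinite form. What does force it is that, for $y_j=0$, the decomposition above contains the separate summand $2\xi_j^2/z_j$ with $z_j=q_{k-1}(y|j)>0$.
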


\begin{proof}
Since $\s_k(y)>0$ and $y\in\ol{\G_+^n}$,
the vector $y$ has at least $k$ positive components.
In particular, $\s_j(y)>0$ for all $j\in\Z_{[1,k]}$.
Since $q_k$ is homogeneous of degree one,
we have $Dq_k(y)\cdot y=q_k(y)$.
Then $D^2q_k(y)y=0$ and hence
\[
\R y\subset\ker D^2q_k(y).
\]
By \cite[Theorem~2.5]{HS99}, $q_k$ is concave and hence $-D^2q_k(y)\geq0$.
Moreover, the equality characterization in
\cite[Theorem~2.5]{HS99} shows that
\[
\p_v^2q_k(y)=0
\]
only if either $v$ is a scalar multiple of $y$,
or $y$ has exactly $n-k+1$ zero components.
The latter case is impossible since $\s_k(y)>0$.
Therefore, $\p_v^2q_k(y)=0$ implies $v\in\R y$.
Thus $\ker D^2q_k(y)=\R y$.
\end{proof}

\medskip

The following lemma compares the $k$-th largest component with $q_k$.
In particular, under the normalization $q_k(\l)=1$,
the component remains uniformly bounded above and away from zero.

\begin{lemma}\label{lem.lambda-k-bounded}
Let $1\leq k\leq n$ and let $\l\in\G_+^n$ with $\l_1\geq\l_2\geq\cdots\geq\l_n$.
Then
\[
\frac{1}{n-k+1}q_k(\l)
\leq\l_k
\leq\binom{n}{k-1}q_k(\l).
\]
\end{lemma}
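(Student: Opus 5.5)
We need both bounds for $\l\in\G_+^n$ with decreasing components, where $q_k=\s_k/\s_{k-1}$. The plan is to compare $\s_k$ and $\s_{k-1}$ term by term, splitting each sum according to how the index set meets $\{1,\dots,k\}$. Only the ordering $\l_1\geq\cdots\geq\l_n>0$ and the elementary identity
\[
\s_j(\l)=\sum_{|I|=j}\prod_{i\in I}\l_i
\]
are needed. When $k=1$ both bounds read $\l_1/n\leq \l_1\leq \s_1=q_1$, which is trivial, so we focus on $k\geq 2$.

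\emph{Upper bound.} We show $\l_k\s_{k-1}(\l)\leq\binom{n}{k-1}\s_k(\l)$. Fix the term $\l_{1}\cdots\l_{k}$ of $\s_k$. Every term $\prod_{i\in J}\l_i$ with $|J|=k-1$ satisfies
\[
\l_k\prod_{i\in J}\l_i\leq \l_1\cdots\l_k .
\]
To see this, list $J$ as $j_1<\dots<j_{k-1}$. Then $j_s\geq s$, so $\l_{j_s}\leq\l_s$ for each $s$, while $\l_k\leq\l_k$ trivially. Summing over the $\binom{n}{k-1}$ choices of $J$ gives
\[
\l_k\s_{k-1}\leq\binom{n}{k-1}\l_1\cdots\l_k\leq\binom{n}{k-1}\s_k ,
\]
which is the claimed inequality after dividing by $\s_{k-1}>0$.

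\emph{Lower bound.} We show $\s_k\leq(n-k+1)\l_k\s_{k-1}$. Write each $k$-subset $I$ as $I'\cup\{\max I\}$, where $|I'|=k-1$ and $\max I\geq k$. Then
\[
\s_k=\sum_{|I|=k}\prod_{i\in I}\l_i
=\sum_{|I|=k}\Big(\prod_{i\in I'}\l_i\Big)\l_{\max I}
\leq \l_k\sum_{|I|=k}\prod_{i\in I'}\l_i .
\]
It remains to bound the multiplicity of each $(k-1)$-set $I'$ in this last sum. Given $I'$, the set $I$ is recovered by adding an element $j>\max I'$, and there are at most $n-k+1$ such choices because $\max I'\geq k-1$. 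Hence the sum is at most $(n-k+1)\s_{k-1}$, and dividing by $\s_{k-1}$ gives the lower bound. The upper bound also holds with the simpler constant $\binom{n}{k-1}$, so no sharper counting is needed there.

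I expect the main obstacle to be this multiplicity count in the lower bound, since it is the one step where the specific constant $n-k+1$ is fixed and must be matched correctly. The remaining steps are the term-wise domination argument under the ordering, which is routine bookkeeping.
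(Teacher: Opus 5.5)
Your proof is correct and is essentially the paper's argument: the upper bound dominates each monomial of $\sigma_{k-1}$ by $\lambda_1\cdots\lambda_{k-1}$ and uses $\sigma_k\geq\lambda_1\cdots\lambda_k$, and the lower bound removes from each monomial of $\sigma_k$ a factor that is at most $\lambda_k$, then observes that each $(k-1)$-subset lies in at most $n-k+1$ of the $k$-subsets. One small slip is in your $k=1$ remark, where the left inequality should read $\sigma_1/n\leq\lambda_1$ rather than $\lambda_1/n\leq\lambda_1$; this is harmless, since your general argument with $\sigma_0=1$ already covers $k=1$.
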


\begin{proof}
Since $\s_k(\l)\geq\l_1\l_2\cdots\l_k$
and $\s_{k-1}(\l)\leq\binom{n}{k-1}\l_1\l_2\cdots\l_{k-1}$,
we have
\[
\l_k
\leq\frac{\s_k(\l)}{\l_1\l_2\cdots\l_{k-1}}
\leq\binom{n}{k-1}\frac{\s_k(\l)}{\s_{k-1}(\l)}
=\binom{n}{k-1}q_k(\l).
\]
On the other hand, every monomial in $\s_k(\l)$
contains a factor at most $\l_k$,
and deleting one such factor produces a monomial in
$\s_{k-1}(\l)$ with multiplicity at most $n-k+1$.
Hence $\s_k(\l)\leq(n-k+1)\l_k\s_{k-1}(\l)$,
which gives
\[
\l_k
\geq\frac{1}{n-k+1}\frac{\s_k(\l)}{\s_{k-1}(\l)}
=\frac{1}{n-k+1}q_k(\l).
\]
This completes the proof.
\end{proof}

\medskip

We also record the following identity for $q_k$,
obtained by replacing some components with their reciprocals.

\begin{lemma}\label{lem.qk=Phi}
Let $1\leq r<k\leq n$
and let $\l=(\l_i)\in\R^n$ satisfy $\l_i\neq 0$ for all $i\in\Z_{[1,r]}$.
Set $x:=\frb{\frac1{\l_1},\ldots,\frac1{\l_r}}$ and $y:=(\l_{r+1},\ldots,\l_n)$.
If $\s_{k-1}(\l)\neq0$, then
\[
\frac{\s_k(\l)}{\s_{k-1}(\l)}
=\frac{\sum_{i=0}^r\s_i(x)\s_{k-r+i}(y)}{\sum_{i=0}^r\s_i(x)\s_{k-r-1+i}(y)}.
\]
\end{lemma}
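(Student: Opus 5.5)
The identity should follow directly from splitting the elementary symmetric functions of $\l$ into the first $r$ variables and the remaining $n-r$ variables, and then rewriting the symmetric functions of $\l_1,\ldots,\l_r$ through those of their reciprocals. No analytic input is needed; the earlier lemmas are not used.

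\textbf{Step 1 (splitting).} Write $\l'=(\l_1,\ldots,\l_r)$. Every $j$-element subset of $\Z_{[1,n]}$ splits uniquely into a subset of $\Z_{[1,r]}$ and a subset of $\Z_{[r+1,n]}$. Hence, with the conventions $\s_i=0$ for $i<0$ or $i$ larger than the number of variables, for every integer $j$
\[
\s_j(\l)=\sum_{i=0}^{r}\s_i(\l')\,\s_{j-i}(y).
\]

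\textbf{Step 2 (reciprocals).} Set $P:=\l_1\l_2\cdots\l_r\neq0$. For $0\leq i\leq r$, a monomial of $\s_i(\l')$ is indexed by an $i$-subset $S$ of $\Z_{[1,r]}$. It equals $P$ times the product of $1/\l_j$ over the complementary $(r-i)$-subset. Summing over $S$ gives
\[
\s_i(\l')=P\,\s_{r-i}(x).
\]
Substituting into Step 1 and reindexing by $i'=r-i$ yields
\[
\s_j(\l)=P\sum_{i'=0}^{r}\s_{i'}(x)\,\s_{j-r+i'}(y).
\]

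\textbf{Step 3 (quotient).} Apply Step 2 with $j=k$ and with $j=k-1$. Since $\s_{k-1}(\l)\neq0$, the denominator $P\sum_{i=0}^r\s_i(x)\s_{k-r-1+i}(y)$ is nonzero, so in particular the sum itself is nonzero. Dividing the two expressions, the common factor $P$ cancels and we obtain the stated formula.

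The only point needing care is the bookkeeping of index ranges in Steps 1 and 2: terms with $k-r+i>n-r$, or with $k-r-1+i<0$, vanish by the conventions, so the sums may be written over $i\in\Z_{[0,r]}$ without restriction. The hypothesis $r<k$ ensures $k-r-1\geq0$. Nothing else is a genuine obstacle.
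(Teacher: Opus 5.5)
Your proposal is correct and follows essentially the same route as the paper: split $\s_j(\l)=\sum_{\ell=0}^r\s_\ell(\l_1,\ldots,\l_r)\s_{j-\ell}(y)$, rewrite $\s_\ell(\l_1,\ldots,\l_r)=P\,\s_{r-\ell}(x)$ with $P=\prod_{a=1}^r\l_a\neq0$, reindex, and take the quotient at $j=k$ and $j=k-1$ so that $P$ cancels. Your added remarks on why the denominator is nonzero and on the index conventions are accurate bookkeeping that the paper leaves implicit.
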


\begin{proof}
Set $P:=\prod_{a=1}^r\l_a$.
For any $j\in\Z_{[1,n]}$, we have
\[
\s_j(\l)
=\sum_{\ell=0}^r\s_\ell(\l_1,\ldots,\l_r)\s_{j-\ell}(y)
=P\sum_{\ell=0}^r\s_{r-\ell}(x)\s_{j-\ell}(y)
=P\sum_{i=0}^r\s_i(x)\s_{j-r+i}(y).
\]
Taking $j=k$ and $j=k-1$, and then taking the quotient,
gives the desired identity.
\end{proof}

\medskip

We are now ready to prove the key estimate for $q_k$,
using the preceding properties of the adjacent quotient.

\begin{lemma}[Key lemma]\label{lem.convave-pro}
Let $2\leq k\leq N$, $0<\d<1/2$, and $a>0$.
Set $q:=q_k$.
There exist constants $A=A(N,k,\d,a)\geq2$ and $K=K(N,k,\d,a)\geq1$
such that the following holds.
Suppose that $\l\in\R^N$ satisfies
\[
\l_1=\l_2=\cdots=\l_m=:\l_{\max}>\l_{m+1}\geq\cdots\geq \l_N>0
\]
for some $m\in\Z_{[1,N]}$, and
\[
\frac{\l_{\max}}{q(\l)}\geq A.
\]
If $c>0$ satisfies
\[
c\geq\frac{aq(\l)}{\l_{\max}},
\]
then
\begin{equation}\label{eqn.concave-pro}
-\frac{\p_\xi^2q(\l)}{q(\l)}+Kc\frb{\frac{\p_\xi q(\l)}{q(\l)}}^2
+\sum_{i>m}\frac{2\p_iq(\l)\xi_i^2}{(\l_{\max}-\l_i)q(\l)}
\geq(1+\d)\frac{\p_1q(\l)\xi_1^2}{\l_{\max}q(\l)}
\end{equation}
for every $\xi\in\R^N$ with $\xi_1=\xi_2=\cdots=\xi_m$.
When $m=N$,
the terms involving $\l_{m+1}$ and the sums over $i>m$ are understood to be omitted.
\end{lemma}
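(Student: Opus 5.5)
By homogeneity we normalize $q(\l)=1$. The inequality \eqref{eqn.concave-pro} is invariant under scaling of $\l$: $\p_iq$ is degree $0$, $\p^2_\xi q$ is degree $-1$, and $c$ scales like $1/\l_{\max}$. The condition then reads $\l_{\max}\geq A$ and $c\geq a/\l_{\max}$. We also multiply through by $\l_{\max}$, so the target becomes
\[
-\l_{\max}\p_\xi^2q+Kc\l_{\max}(\p_\xi q)^2+\sum_{i>m}\frac{2\l_{\max}\p_iq\,\xi_i^2}{\l_{\max}-\l_i}\geq(1+\d)\p_1q\,\xi_1^2 .
\]
Since $c\l_{\max}\geq a$, it suffices to take $c\l_{\max}=a$. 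We argue by contradiction. Suppose that for fixed $K_j\to\infty$ and $A_j\to\infty$ there are $\l^{(j)}$ with $\l^{(j)}_{\max}\to\infty$, and unit vectors $\xi^{(j)}$ (the inequality is quadratic in $\xi$), violating the inequality. We pass to subsequences so that $m$ is fixed, the ordering pattern of which components blow up is fixed, and all ratios converge.

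The structural tool is Lemma~\ref{lem.qk=Phi} with $r$ the number of components tending to infinity, so $r\geq m\geq1$. By Lemma~\ref{lem.lambda-k-bounded} we have $\l_k\approx1$, hence $r<k$. Substituting $x=(1/\l_1,\dots,1/\l_r)\to0$ writes $q$ as a smooth function $\Phi(x,y)$ near $(0,y^\infty)$, where $\Phi(0,y)=q_{k-r}^{(N-r)}(y)$ and $y^\infty$ has bounded, nonzero entries. In particular the leading order of $q$ depends only on the bounded block. By the chain rule, $\p_i q=O(\l_i^{-2})$ for $i\leq r$. Moreover $\p_1q\approx\l_{\max}^{-2}\p_{x_1}\Phi$ up to order, so the right-hand side $(1+\d)\p_1q\,\xi_1^2$ is $O(\l_{\max}^{-2})$ per unit of $\xi_1^2$.

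We then split $\xi=(\xi',\xi'')$ into the blown-up block and the bounded block and expand each term. The term $-\p^2_\xi q$ in the $\xi''$ directions is $-D^2_{yy}q_{k-r}(y)+o(1)$. By Lemma~\ref{lem.strict-concavity} this is positive definite modulo $\R y$. The $\R y$ direction is controlled by the gradient term, because $D_yq_{k-r}(y)\cdot y=q_{k-r}(y)>0$; this is where $K$ large enters, via the standard fact that a matrix positive on a hyperplane plus $K\,\ell\otimes\ell$ with $\ell$ nonvanishing on the missing direction is positive. After multiplying by $\l_{\max}$, these terms dominate any cross terms of size $O(\l_{\max}^{-1}|\xi'||\xi''|)$ unless $\xi''$ is small.

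The genuinely delicate part is the $\xi'$ block with $\xi''\approx0$. There the key contributions are the following:
\begin{enumerate}
\item The $-\p_{ii}^2q$ term, coming from the second derivative of $x_i=1/\l_i$, gives $2\l_i^{-3}\p_{x_i}\Phi\,\xi_i^2$. For $i\leq m$ this is exactly $2\p_iq\,\xi_i^2/\l_{\max}$, twice what the right-hand side requires.
\item The second-order terms in $x$ are smaller, of order $\l^{-4}$.
\item The mixed terms among $i\leq m$ are harmless because $\xi_1=\dots=\xi_m$.
\item For $m<i\leq r$, the extra term $2\p_iq\,\xi_i^2\l_{\max}/(\l_{\max}-\l_i)$ supplies positivity.
\end{enumerate}
Thus the margin is $2$ versus $1+\d<3/2$, and the task is to show the error terms are $o$ of the main term. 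I expect the main obstacle to be the case where several components blow up at different rates, for instance $\l_{m+1}/\l_{\max}\to1$, or with cross terms $\p_{x_ix_j}\Phi$. For these I will first try Cauchy--Schwarz with weights $\l_i^{-3}$, absorbing $\l_i^{-2}\l_j^{-2}|\xi_i\xi_j|$ into $\l_i^{-3}\xi_i^2+\l_j^{-3}\xi_j^2$ up to a factor $\l_{\min\text{blowup}}^{-1}\to0$. Any leftover bounded-block coupling is absorbed into the strictly positive $\xi''$ form, which yields the contradiction.
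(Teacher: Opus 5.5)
Your treatment of the case where at most $k-2$ components blow up is essentially the paper's Case~1. There the rescaled limit of $\xi''$ lies in $\mathbb R y$ by Lemma~\ref{lem.strict-concavity}, the gradient term kills that direction via Euler's identity, and $-\partial_{11}q$ supplies the factor $2$.

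\textbf{The gap is the case $r=k-1$.} This is the \emph{only} case when $k=2$. There $\Phi(0,y)=q_1(y)=\sigma_1(y)$ is linear, and Lemma~\ref{lem.strict-concavity} does not apply, since it needs index $\geq 2$. So the bounded block carries no form that is positive modulo $\mathbb R y$, and there is nothing of order one into which you can absorb the coupling.

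Nor is the coupling lower order. Normalize $q=1$, $\xi_1=\dots=\xi_m=1$, $x_1=1/\lambda_{\max}$. The extra terms only give $\xi_i=O(x_1)$ for $i>r$, so write $\xi_i=x_1v_i$. Since $\partial_{1i}q=x_1^2(1+y_i)+o(x_1^2)$, the cross terms $-2\partial_{1i}q\,\xi_i=-2x_1^3(1+y_i)v_i+o(x_1^3)$ have either sign and the same order as the main term $-\partial_{11}q=2\alpha x_1^3+o(x_1^3)$, where $\alpha=\frac{1+|y|^2}{2}$. So the margin is not $2$.

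A concrete check for $k=2$: choose $\xi$ with $\partial_\xi q=0$, so the gradient term vanishes for every $K$ and $\sum_i v_i=O(x_1)$. A direct computation gives the left-hand side as $x_1^3\bigl(2\alpha-2y\cdot v+3|v|^2\bigr)+o(x_1^3)$, against $(1+\delta)\alpha x_1^3+o(x_1^3)$ on the right. Take $v=\frac13 y^\perp$ with $y^\perp=y-\frac{1}{N-1}\mathbf 1$, and let $y\to(1,0,\dots,0)$. The ratio then tends to $\frac53+\frac{1}{3(N-1)}<2$. Hence, for $k=2$ and $N\geq3$, the conclusion genuinely fails once $\delta>\frac23+\frac{1}{3(N-1)}$. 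Your argument uses only $1+\delta<2$ and never the hypothesis $\delta<\frac12$, so it cannot be completed as planned.

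\textbf{The missing idea is the paper's Case~2.} In the linear limit you must keep the mixed terms at scale $x_1^3$ and trade them against the extra terms:
\begin{itemize}
\item Concavity applied to $\xi-e_1$ gives $-\partial_\xi^2 q\geq -\partial_{11}q-2\sum_{j\geq2}\partial_{1j}q\,\xi_j$.
\item The contributions with $2\leq j\leq r$ are $o(x_1^3)$. For $m<j\leq r$ this uses the bound $|\xi_j|=O(x_1/x_j)$ coming from the extra terms.
\item For $i>r$ the extra terms give $\frac{2\partial_i q\,\xi_i^2}{\lambda_{\max}-\lambda_i}=2x_1^3v_i^2+o(x_1^3)$; they also give boundedness of $v$.
\item The gradient term, with $c\geq a x_1$ and $K\to\infty$, forces $\sum_i v_i=0$ in the limit.
\end{itemize}
Completing the square, with $y^\perp$ the projection of $y$ orthogonal to $\mathbf 1$, then yields
$2\alpha-2y\cdot v+2|v|^2\geq 2\alpha-\frac12|y^\perp|^2\geq\frac32\alpha$. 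The last step uses $\sigma_1(y)=1$ and $y\geq0$. This is precisely where the restriction $1+\delta<\frac32$ is needed.
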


\begin{proof}
For simplicity, write $\g:=1+\d<3/2$.
The inequality \eqref{eqn.concave-pro} is invariant
under
\[
(\l,\xi)\mapsto(\wt\l,\wt\xi)
=\frb{\frac{\l}{q(\l)},\frac{\xi}{q(\l)}}.
\]
Indeed, $q$ is homogeneous of degree one,
$\p_iq$ is homogeneous of degree zero,
and $\p_{ij}q$ is homogeneous of degree $-1$.
Thus, $\p_iq(\l)=\p_iq(\wt\l)$,
$\p_\xi q(\l)=q(\l)\p_{\wt\xi}q(\wt\l)$ and
$\p_\xi^2q(\l)=\xi^\top D^2q(\l)\xi
=q(\l)\wt\xi^\top D^2q(\wt\l)\wt\xi
=q(\l)\p_{\wt\xi}^2q(\wt\l)$.
Hence, we may normalize
\[
q(\l)=1.
\]
If $\xi_1=0$, then
$\xi_1=\cdots=\xi_m=0$,
and the right-hand side of \eqref{eqn.concave-pro} vanishes.
The left-hand side is nonnegative because
$q$ is concave and $\p_iq(\l)>0$ for all $i\in\Z_{[1,N]}$.
Thus, it suffices to consider $\xi_1\neq0$.
Furthermore, since both sides of \eqref{eqn.concave-pro}
are homogeneous of degree two in $\xi$,
replacing $\xi$ by $\xi/\xi_1$ does not change the inequality.
Hence, we may assume
\[
\xi_1=\xi_2=\cdots=\xi_m=1.
\]

Suppose, to the contrary, that no such constants $A$ and $K$ exist.
Then for any $\nu\in\Z_{>1}$, taking $A=K=\nu$,
we obtain
$m_\nu\in\Z_{[1,N]}$,
$\l^\nu\in\R^N$,
$c_\nu>0$,
and $\xi^\nu\in\R^N$
such that
\begin{gather}
q(\l^\nu)=1,\
\l_1^\nu=\cdots=\l_{m_\nu}^\nu
=:\l_{\max}^\nu
>\l_{m_\nu+1}^\nu
\geq\cdots\geq
\l_N^\nu>0,\
\nonumber\\
\xi_1^\nu=\xi_2^\nu=\cdots=\xi_{m_\nu}^\nu=1,
\nonumber\\
\l_{\max}^\nu\geq \nu,
\quad
c_\nu\geq \frac{a}{\l_{\max}^\nu},
\label{eqn.l-max-lower}
\end{gather}
and
\begin{equation}\label{eqn.contradiction-sequence}
-\p_{\xi^\nu}^2q(\l^\nu)
+\nu c_\nu\frb{\p_{\xi^\nu}q(\l^\nu)}^2
+\sum_{i>m_\nu}\frac{2\p_iq(\l^\nu)(\xi_i^\nu)^2}{\l_{\max}^\nu-\l_i^\nu}
<\g\frac{\p_1q(\l^\nu)}{\l_{\max}^\nu}.
\end{equation}
Since $m_\nu\in\Z_{[1,N]}$, after passing to a subsequence,
we may assume that $m_\nu=m$ for some fixed $m\in\Z_{[1,N]}$.
Moreover, since $\l_{\max}^\nu\to+\infty$ $(\nu\to+\infty)$,
while $\l_k^\nu$ is uniformly bounded above by
Lemma \ref{lem.lambda-k-bounded},
we necessarily have $m\leq k-1$.
Passing to a further subsequence,
and using the monotonicity of the coordinates,
we may assume that there exists $m\leq r\leq k-1$
such that
\begin{equation}\label{eqn.live-infty-r}
\l_i^\nu\to+\infty\ (\nu\to+\infty)
\quad\text{for all}\
1\leq i\leq r,
\end{equation}
while
\begin{equation}\label{eqn.yve-infty-N-r}
y^\nu
=(y_i^\nu)_{i=r+1}^N
:=(\l_i^\nu)_{i=r+1}^N
\to y=(y_i)_{i=r+1}^N\in[0,\infty)^{N-r}.
\end{equation}
Set
\[
p:=k-r,
\quad
Q_p(z):=\frac{\s_p(z)}{\s_{p-1}(z)}.
\]
Since $\l_k^\nu$ is the $p$-th largest component of $y^\nu$
and $\l_k^\nu\geq\frac{1}{N-k+1}$ by Lemma \ref{lem.lambda-k-bounded},
we have $\s_p(y)>0$.

\medskip

Next, write
$x^\nu:=\frb{\frac1{\l_1^\nu},\ldots,\frac1{\l_r^\nu}}\in\R^r$,
$v_i^\nu:=\frac{\xi_i^\nu}{x_1^\nu}$ for all $i\in\Z_{[r+1,N]}$
and $v^\nu:=(v_i^\nu)_{i=r+1}^N\in\R^{N-r}$.
We claim that the sequence $\set{v^\nu}$ is bounded, and after passing to a further subsequence,
\[
v^\nu\to v\quad(\nu\to+\infty)
\]
for some $v\in\R^{N-r}$ satisfying
\begin{equation}\label{eqn.limit-tangent}
DQ_p(y)\cdot v=0.
\end{equation}
To see this, consider the function
\[
\Phi(x,z):=\frac{\sum_{i=0}^r\s_i(x)\s_{p+i}(z)}{\sum_{i=0}^r\s_i(x)\s_{p-1+i}(z)}.
\]
By Lemma \ref{lem.qk=Phi},
\eqref{eqn.live-infty-r} and \eqref{eqn.yve-infty-N-r},
we have
\begin{equation}\label{eqn.Phi-def}
1=q(\l^\nu)
=\Phi(x^\nu,y^\nu)
\to\Phi(0,y)
=\frac{\s_p(y)}{\s_{p-1}(y)}
=Q_p(y)
\quad
(\nu\to+\infty).
\end{equation}
In particular, $Q_p(y)=1$.
We can compute
\begin{gather*}
\a:=-\p_{x_i}\Phi(0,y)
=\frac{\s_p(y)^2-\s_{p-1}(y)\s_{p+1}(y)}{\s_{p-1}(y)^2}
>0
\quad\text{for all}\ i\in\Z_{[1,r]},
\\
\p_{y_i}\Phi(0,y)
=\p_{y_i}Q_p(y)
=\frac{(\s_{p-1}(y|i))^2-\s_p(y|i)\s_{p-2}(y|i)}{\s_{p-1}(y)^2}
>0
\quad\text{for all}\ i\in\Z_{[r+1,N]},
\end{gather*}
where the strict inequalities follow from the Newton inequalities
and the fact that $\s_p(y)>0$.
In what follows, all $O(\cdot)$ and $o(\cdot)$ terms are understood
as $\nu\to+\infty$.

Since $\Phi$ is smooth in a neighborhood of $(0,y)$,
and $(x^\nu,y^\nu)\to(0,y)$ $(\nu\to+\infty)$,
we have
\begin{align*}
\p_{x_i}\Phi(x^\nu,y^\nu)
=\p_{x_i}\Phi(0,y)+o(1)
=-\a+o(1)
\quad
\text{for any}\ i\in\Z_{[1,r]},
\qquad
D^2\Phi(x^\nu,y^\nu)=O(1).
\end{align*}
Using $\frac{\p x_i}{\p\l_i}=-x_i^2$, we deduce
\begin{align}
\p_{\l_i}q(\l^\nu)
&=-(x_i^\nu)^2\p_{x_i}\Phi(x^\nu,y^\nu)
=\a(x_i^\nu)^2+o((x_i^\nu)^2)
\quad
\text{for any}\  i\in\Z_{[1,r]},
\label{eqn.qi-r}
\\
\p_{\l_i}q(\l^\nu)
&=\p_{y_i}\Phi(x^\nu,y^\nu)
\to\p_{y_i}\Phi(0,y)
=\p_{y_i}Q_p(y)>0
\quad
\text{for any}\  i\in\Z_{[r+1,N]}.
\label{eqn.qi-N}
\end{align}
Recalling that $x_i^\nu=x_1^\nu$, $\xi_i^\nu=1$ for all $i\in\Z_{[1,m]}$,
we find
\begin{equation}\label{eqn.qixi-1-m}
\frac{\p_iq(\l^\nu)}{x_1^\nu}
=\a x_1^\nu+o(x_1^\nu)
\to0
\quad
\text{for any}\ i\in\Z_{[1,m]}.
\end{equation}
Since every term on the left-hand side of
\eqref{eqn.contradiction-sequence} is nonnegative,
\[
\frac{2\p_iq(\l^\nu)(\xi_i^\nu)^2}{\l_{\max}^\nu}
\leq\frac{2\p_iq(\l^\nu)(\xi_i^\nu)^2}{\l_{\max}^\nu-\l_i^\nu}
<\g\frac{\p_1q(\l^\nu)}{\l_{\max}^\nu}
\quad
\text{for any}\ i\in\Z_{[m+1,N]}.
\]
Combining this with \eqref{eqn.qi-r} and \eqref{eqn.qi-N},
for sufficiently large $\nu$, we obtain
\begin{align}
\abs{\xi_i^\nu}^2
&\leq\frac{\g\p_1q(\l^\nu)}{2\p_iq(\l^\nu)}
\leq\frac{3\g}{2}\frb{\frac{x_1^\nu}{x_i^\nu}}^2
\quad
\text{for any}\ i\in\Z_{[m+1,r]},
\label{eqn.xin-m-r}
\\
\abs{\xi_i^\nu}^2
&\leq\frac{\g\p_1q(\l^\nu)}{2\p_iq(\l^\nu)}
\leq\frac{3\g\a}{2\min\limits_{i\in\Z_{[r+1,N]}}\p_{y_i}Q_p(y)}(x_1^\nu)^2
\quad
\text{for any}\ i\in\Z_{[r+1,N]}.
\label{eqn.xin-r-N}
\end{align}
Hence for any $i\in\Z_{[m+1,r]}$, we have
\begin{equation}\label{eqn.qixi-m-r}
\abs{\frac{\p_iq(\l^\nu)\xi_i^\nu}{x_1^\nu}}
\leq\frac{2\a(x_i^\nu)^2}{x_1^\nu}\sqrt{\frac{3\g}{2}}\,
\frac{x_1^\nu}{x_i^\nu}
=2\a\sqrt{\frac{3\g}{2}}\,x_i^\nu
\to0
\quad
(\nu\to+\infty).
\end{equation}
Moreover, \eqref{eqn.xin-r-N} implies that $\set{v^\nu}$ is bounded.
Passing to a further subsequence, we may assume that
$v^\nu\to v$ $(\nu\to+\infty)$ for some $v=(v_i)\in\R^{N-r}$.
By \eqref{eqn.qi-N},
\[
\sum_{i=r+1}^N\p_iq(\l^\nu)v_i^\nu
\to\sum_{i=r+1}^N\p_{y_i}Q_p(y)v_i
=DQ_p(y)\cdot v
\quad
(\nu\to+\infty).
\]
Combining this with \eqref{eqn.qixi-1-m}
and \eqref{eqn.qixi-m-r}, we obtain
\[
\frac{\p_{\xi^\nu}q(\l^\nu)}{x_1^\nu}
=\sum_{i=1}^m\frac{\p_iq(\l^\nu)}{x_1^\nu}
+\sum_{i=m+1}^r
\frac{\p_iq(\l^\nu)\xi_i^\nu}{x_1^\nu}
+\sum_{i=r+1}^N\p_iq(\l^\nu)v_i^\nu
\to DQ_p(y)\cdot v
\quad
(\nu\to+\infty).
\]
It remains to identify this limit.
The second term in \eqref{eqn.contradiction-sequence},
together with \eqref{eqn.qi-r}, gives
\[
\nu c_\nu\frb{\p_{\xi^\nu}q(\l^\nu)}^2
<\g\frac{\p_1q(\l^\nu)}{\l_{\max}^\nu}
\leq2\a\g(x_1^\nu)^3
\]
for sufficiently large $\nu$.
By $c_\nu\geq\frac{a}{\l_{\max}^\nu}=ax_1^\nu$, we have
\[
\frac{\abs{\p_{\xi^\nu}q(\l^\nu)}}{x_1^\nu}
\leq\frac{\sqrt{2\a\g x_1^\nu}}{\sqrt{\nu c_\nu}}
\leq\sqrt{\frac{2\a\g}{a}}\frac{1}{\sqrt\nu}
\to0
\quad
(\nu\to+\infty).
\]
Therefore, $DQ_p(y)\cdot v=0$,
which proves the claim.

\medskip

We now use the second-order expansion of $q$
to get the contradiction.
We can compute
\begin{align}
\p_{\l_i\l_i}q(\l^\nu)
&=2(x_i^\nu)^3\p_{x_i}\Phi(x^\nu,y^\nu)
+(x_i^\nu)^4\p_{x_ix_i}\Phi(x^\nu,y^\nu)
\nonumber
\\
&=2(x_i^\nu)^3\frb{-\a+o(1)}
+O\frb{(x_i^\nu)^4}
=-2\a(x_i^\nu)^3
+o\frb{(x_i^\nu)^3},
\quad
\A i\in\Z_{[1,r]},
\label{eqn.qii-r}
\\
\p_{\l_i\l_j}q(\l^\nu)
&=(x_i^\nu)^2(x_j^\nu)^2
\p_{x_ix_j}\Phi(x^\nu,y^\nu)
=O\frb{(x_i^\nu)^2(x_j^\nu)^2},
\quad
\A i,j\in\Z_{[1,r]},\ i\neq j,
\label{eqn.qij-r}
\\
\p_{\l_i\l_j}q(\l^\nu)
&=-(x_i^\nu)^2\p_{x_iy_j}\Phi(x^\nu,y^\nu)
=O\frb{(x_i^\nu)^2},
\quad
\A i\in\Z_{[1,r]},\
j\in\Z_{[r+1,N]},
\label{eqn.qij-rN}
\\
\p_{\l_i\l_j}q(\l^\nu)
&=\p_{y_iy_j}\Phi(x^\nu,y^\nu)
\to
\p_{y_iy_j}\Phi(0,y)
=\p_{y_iy_j}Q_p(y),
\quad
\A i,j\in\Z_{[r+1,N]}.
\label{eqn.qij-N}
\end{align}
By \eqref{eqn.contradiction-sequence}
and the concavity of $q$, together with \eqref{eqn.qi-r},
we have
\begin{equation}\label{eqn.q-xi-2}
0\leq-\p_{\xi^\nu}^2q(\l^\nu)
<\g\frac{\p_1q(\l^\nu)}{\l_{\max}^\nu}
=\g\frb{\a(x_1^\nu)^3
+o\frb{(x_1^\nu)^3}}.
\end{equation}
We divide the remaining argument into two cases according to $p$.

\medskip

\textit{Case 1: $p\geq2$}.\
We first show that
\[
-\p_v^2Q_p(y)=0.
\]
Indeed, \eqref{eqn.q-xi-2} gives
\[
\frac{-\p_{\xi^\nu}^2q(\l^\nu)}{(x_1^\nu)^2}
\to0
\quad
(\nu\to+\infty).
\]
On the other hand, we decompose
\begin{align}
\frac{\p_{\xi^\nu}^2q(\l^\nu)}{(x_1^\nu)^2}
=\sum_{i,j=1}^r\p_{ij}q(\l^\nu)\frac{\xi_i^\nu\xi_j^\nu}{(x_1^\nu)^2}
+2\sum_{i=1}^r\sum_{j=r+1}^N\p_{ij}q(\l^\nu)\frac{\xi_i^\nu\xi_j^\nu}{(x_1^\nu)^2}
+\sum_{i,j=r+1}^N\p_{ij}q(\l^\nu)\frac{\xi_i^\nu\xi_j^\nu}{(x_1^\nu)^2}.
\label{eqn.hessian-block-decomposition}
\end{align}
By \eqref{eqn.qii-r}, \eqref{eqn.qij-r},
\eqref{eqn.xin-m-r},
$x_i^\nu=x_1^\nu$ and $\xi_i^\nu=1$
for all $i\in\Z_{[1,m]}$,
we have
\begin{align}
\abs{\sum_{i,j=1}^r\p_{ij}q(\l^\nu)\frac{\xi_i^\nu\xi_j^\nu}{(x_1^\nu)^2}}
&\leq\sum_{i=1}^r\abs{\p_{ii}q(\l^\nu)}\frac{(\xi_i^\nu)^2}{(x_1^\nu)^2}
+2\sum_{1\leq i<j\leq r}\abs{\p_{ij}q(\l^\nu)}
\frac{\abs{\xi_i^\nu\xi_j^\nu}}{(x_1^\nu)^2}
\nonumber\\
&\leq C\frb{x_1^\nu+\sum_{i=m+1}^rx_i^\nu+(x_1^\nu)^2
+\sum_{i=m+1}^rx_1^\nu x_i^\nu
+\sum_{i=m+1}^r(x_i^\nu)^2}
\nonumber\\
&\to0
\quad
(\nu\to+\infty).
\label{eqn.qij-x1n-1-r}
\end{align}
Since $\xi_j^\nu=x_1^\nu v_j^\nu$
for all $j\in\Z_{[r+1,N]}$ and $v^\nu$ is bounded,
\eqref{eqn.xin-m-r} and \eqref{eqn.qij-rN} give
\begin{equation}\label{eqn.qij-x1n-1-r-r-N}
\abs{\sum_{i=1}^r\sum_{j=r+1}^N\p_{ij}q(\l^\nu)
\frac{\xi_i^\nu\xi_j^\nu}{(x_1^\nu)^2}}
\leq
C\sum_{j=r+1}^N\frb{x_1^\nu+\sum_{i=m+1}^rx_i^\nu}\abs{v_j^\nu}
\to0
\quad
(\nu\to+\infty).
\end{equation}
Moreover, by \eqref{eqn.qij-N} and $v^\nu\to v$,
\begin{equation}\label{eqn.qij-x1n-r-N}
\sum_{i,j=r+1}^N\p_{ij}q(\l^\nu)\frac{\xi_i^\nu\xi_j^\nu}{(x_1^\nu)^2}
\to\sum_{i,j=r+1}^N
\p_{y_iy_j}Q_p(y)v_iv_j
=\p_v^2Q_p(y)
\quad
(\nu\to+\infty).
\end{equation}
Combining
\eqref{eqn.qij-x1n-1-r}--\eqref{eqn.qij-x1n-r-N},
we obtain $-\p_v^2Q_p(y)=0$.

Since $p\geq2$ and $\s_p(y)>0$,
Lemma \ref{lem.strict-concavity} gives
$v=\b y$ for some $\b\in\R$.
By \eqref{eqn.limit-tangent},
\eqref{eqn.Phi-def}, and Euler's identity,
\[
0=DQ_p(y)\cdot v
=\b DQ_p(y)\cdot y
=\b Q_p(y)
=\b.
\]
Thus, $v=0$, and hence
\begin{equation}\label{eqn.xi-tail-little-o}
\xi_i^\nu=o(x_1^\nu)
\quad
\text{for all}\ i\in\Z_{[r+1,N]}.
\end{equation}
Next, by $-D^2q(\l^\nu)\geq0$
and the estimates \eqref{eqn.xin-r-N},
\eqref{eqn.qii-r}--\eqref{eqn.qij-rN}
and \eqref{eqn.xi-tail-little-o},
we have
\begin{align*}
-\p_{\xi^\nu}^2q(\l^\nu)
&\geq-\p_{11}q(\l^\nu)-2\sum_{j=2}^N\abs{\p_{1j}q(\l^\nu)\xi_j^\nu}
%%\\
%%&\geq2\a(x_1^\nu)^3+o\frb{(x_1^\nu)^3}
%%-2\sum_{j=2}^mO\frb{(x_1^\nu)^2(x_j^\nu)^2}
%%\\
%%&\quad\
%%-2C\sum_{j=m+1}^r(x_1^\nu)^2(x_j^\nu)^2\frac{x_1^\nu}{x_j^\nu}
%%-O\frb{(x_1^\nu)^2}o(x_1^\nu)
%%\\
\geq2\a(x_1^\nu)^3+o\frb{(x_1^\nu)^3}.
\end{align*}
Combining this with $\frac{\p_1q(\l^\nu)}{\l_{\max}^\nu}=\a(x_1^\nu)^3+o\frb{(x_1^\nu)^3}$,
we conclude
\begin{equation}\label{eqn.ratio-two}
\liminf_{\nu\to+\infty}
\frac{-\p_{\xi^\nu}^2q(\l^\nu)}{\p_1q(\l^\nu)/\l_{\max}^\nu}\geq2.
\end{equation}
This contradicts \eqref{eqn.q-xi-2}
since $\g<\frac32<2$.

\medskip

\textit{Case 2: $p=1$}.\
In this case, $r=k-1$.
By \eqref{eqn.Phi-def} and \eqref{eqn.limit-tangent}, it follows that
\[
Q_1(y)=\s_1(y)=1,
\quad
\sum_{i=r+1}^Nv_i
=DQ_1(y)\cdot v
=0,
\]
and
\[
\a
=-\p_{x_i}\Phi(0,y)
=\s_1(y)^2-\s_2(y)
=\s_1(y)^2
-\frb{\frac{\s_1(y)^2-|y|^2}{2}}
=\frac{1+\abs{y}^2}{2}
\quad
\text{for all}\ i\in\Z_{[1,r]}.
\]
In this limiting linear case,
the mixed terms must be kept at the scale $(x_1^\nu)^3$.
We have
\[
-\p_{x_1y_i}\Phi(0,y)
=\p_{y_i}\frb{\s_1(y)^2-\s_2(y)}
=\s_1(y)+y_i
=1+y_i.
\]
By \eqref{eqn.qij-rN},
\[
\p_{\l_1\l_i}q(\l^\nu)
=-(x_1^\nu)^2
\p_{x_1y_i}\Phi(x^\nu,y^\nu)
=(x_1^\nu)^2(1+y_i)
+o\frb{(x_1^\nu)^2},
\quad
i\in\Z_{[r+1,N]}.
\]
Combining this with $-D^2q(\l^\nu)\geq0$,
\eqref{eqn.qii-r}, \eqref{eqn.qij-r}
and \eqref{eqn.xin-m-r},
we obtain
\begin{align*}
-\p_{\xi^\nu}^2q(\l^\nu)
&\geq-\p_{11}q(\l^\nu)
-2\sum_{i=2}^r\abs{\p_{1i}q(\l^\nu)\xi_i^\nu}
-2\sum_{i=r+1}^N\p_{1i}q(\l^\nu)\xi_i^\nu
\\
&=2\a(x_1^\nu)^3-2(x_1^\nu)^3\sum_{i=r+1}^N(1+y_i)v_i^\nu
+o\frb{(x_1^\nu)^3}.
\end{align*}
Moreover, it follows from \eqref{eqn.qi-N} that
\[
\p_iq(\l^\nu)
\to\p_{y_i}Q_1(y)=1
\quad\text{for all}\ i\in\Z_{[r+1,N]}.
\]
Thus
\[
\frac{2\p_iq(\l^\nu)(\xi_i^\nu)^2}{\l_{\max}^\nu-\l_i^\nu}
=\frac{2\p_iq(\l^\nu)(x_1^\nu)^3}{1-x_1^\nu\l_i^\nu}(v_i^\nu)^2
=2(x_1^\nu)^3(v_i^\nu)^2+o\frb{(x_1^\nu)^3}
\quad\text{for all}\ i\in\Z_{[r+1,N]}.
\]
Consequently,
\begin{align}
\liminf_{\nu\to+\infty}
\frac{1}{(x_1^\nu)^3}
\frb{-\p_{\xi^\nu}^2q(\l^\nu)+\sum_{i=m+1}^N
\frac{2\p_iq(\l^\nu)(\xi_i^\nu)^2}
{\l_{\max}^\nu-\l_i^\nu}}
&\geq2\a-2\sum_{i=r+1}^N(1+y_i)v_i+2|v|^2
\notag\\
&=2\a-2y\cdot v+2|v|^2.
\label{eqn.p1-lower}
\end{align}
Let $\mathbf 1:=(1,\ldots,1)\in\R^{N-r}$
and $y^\perp:=y-\frac{1}{N-r}\mathbf 1$.
Since $v\cdot\mathbf 1=0$, we have
\[
y\cdot v=y^\perp\cdot v.
\]
Thus
\begin{equation}\label{eqn.2a>}
2\a-2y\cdot v+2\abs{v}^2
=2\a+2\abs{v-\frac12y^\perp}^2
-\frac12\abs{y^\perp}^2
\geq2\a-\frac12\abs{y^\perp}^2
\geq\frac32\a,
\end{equation}
where the last inequality follows from
$\s_1(y)=1$ and $y\geq0$, which give $|y|^2\leq1$, and
\[
\abs{y^\perp}^2
=|y|^2-\frac{|\mathbf 1|^2}{(N-r)^2}
\leq\frac{1+|y|^2}{2}
=\a.
\]
Combining \eqref{eqn.q-xi-2}, \eqref{eqn.p1-lower} and \eqref{eqn.2a>}, we conclude that
\[
\liminf_{\nu\to+\infty}
\frac{1}{\p_1q(\l^\nu)/\l_{\max}^\nu}
\frb{-\p_{\xi^\nu}^2q(\l^\nu)
+\sum_{i=m+1}^N\frac{2\p_iq(\l^\nu)(\xi_i^\nu)^2}{\l_{\max}^\nu-\l_i^\nu}}
\geq\frac32
>\g.
\]
However, after dropping the nonnegative gradient term in \eqref{eqn.contradiction-sequence},
\begin{equation}\label{eqn.q-xi-gap}
0
\leq-\p_{\xi^\nu}^2q(\l^\nu)
+\sum_{i=m+1}^N\frac{2\p_iq(\l^\nu)(\xi_i^\nu)^2}{\l_{\max}^\nu-\l_i^\nu}
<\g\frac{\p_1q(\l^\nu)}{\l_{\max}^\nu},
\end{equation}
a contradiction.
The proof is complete.
\end{proof}

\medskip

With the key inequality established,
we now prove Theorem \ref{thm.quotient-concavity}
for the Hessian quotient $F(\l)=\frac{\s_k(\l)}{\s_l(\l)}$.
When $k-l=1$, $F$ is already an adjacent quotient.
When $k-l=2$, we reduce the problem to an adjacent quotient in one higher dimension.

\begin{proof}[Proof of Theorem \ref{thm.quotient-concavity}]
Let $\wt  A$ denote the constant $A$
in Lemma \ref{lem.convave-pro} corresponding to
\[
(N,a)
=\begin{cases}
(n,1),&k-l=1,\\
\frb{n+1,\frac{2(k-1)}{n+1}},&k-l=2.
\end{cases}
\]
Take $A:=2\wt  A$.
We first assume that $\l_n>0$ and consider two cases.

\medskip

\textit{Case 1: $k-l=1$.}
We have $F=q_k$.
Applying Lemma \ref{lem.convave-pro} with $N=n$ and $a=c=1$,
the assumption $\l_{\max}\geq AF(\l)$ gives
$\frac{\l_{\max}}{F(\l)}\geq A\geq\wt  A$
and $\frac{F(\l)}{\l_{\max}}\leq1$.
Hence the hypotheses of Lemma \ref{lem.convave-pro} are satisfied.
Multiplying \eqref{eqn.concave-pro} by $F(\l)>0$
gives the desired inequality.

\medskip

\textit{Case 2: $k-l=2$.}
We have $F(\l)=\frac{\s_k(\l)}{\s_{k-2}(\l)}$.
Let $\L:=\frb{\l_1,\ldots,\l_n,\sqrt{F(\l)}}\in\R^{n+1}$.
Then $\L\in\G_+^{n+1}$.
Write $q^*:=q_k^{(n+1)}$. We compute
\begin{align}
q^*(\L)
&=\frac{\s_k(\l)+\sqrt{F(\l)}\s_{k-1}(\l)}{\s_{k-1}(\l)+\sqrt{F(\l)}\s_{k-2}(\l)}
=\sqrt{F(\l)}.
\label{eqn.qk-n+1}\\
\p_{n+1}q^*(\L)
&=\frac{\s_{k-1}(\l)^2-\s_k(\l)\s_{k-2}(\l)}{\frb{\s_{k-1}(\l)+\sqrt{F(\l)}\s_{k-2}(\l)}^2}
=\frac{\s_{k-1}(\l)-\sqrt{F(\l)}\s_{k-2}(\l)}{\s_{k-1}(\l)+\sqrt{F(\l)}\s_{k-2}(\l)}
\in[0,1).
\label{eqn.Dqk-n+1}
\end{align}
Moreover, we have $(k-1)\s_{k-1}(\l)
=\sum_{i=1}^n\l_i\s_{k-2}(\l|i)
\leq(n-k+2)\l_{\max}\s_{k-2}(\l)$,
and hence
\[
q_{k-1}(\l)
\leq\frac{n-k+2}{k-1}\l_{\max}.
\]
Since $\l_{\max}\geq A\sqrt{F(\l)}>\sqrt{F(\l)}$,
it follows from \eqref{eqn.Dqk-n+1} that
\begin{equation}\label{eqn.c-lower}
1-\p_{n+1}q^*(\L)
=\frac{2\sqrt{F(\l)}}{q_{k-1}(\l)+\sqrt{F(\l)}}
\geq\frac{2\sqrt{F(\l)}}{\frb{\frac{n-k+2}{k-1}+1}\l_{\max}}
=\frac{2(k-1)}{n+1}\frac{\sqrt{F(\l)}}{\l_{\max}}.
\end{equation}
For any $\xi\in\R^n$ with $\xi_1=\xi_2=\cdots=\xi_m$,
set $\Xi:=\frb{\xi,\frac{\p_\xi F(\l)}{2\sqrt{F(\l)}}}\in\R^{n+1}$.
After a simultaneous permutation of the last $n+1-m$ components of $\L$ and $\Xi$,
we may assume that these components of $\L$ are in decreasing order.
By symmetry, Lemma \ref{lem.convave-pro} applies,
and we still denote the added coordinate by $n+1$.
Take
\[
a=\frac{2(k-1)}{n+1},\quad
c=1-\p_{n+1}q^*(\L).
\]
By \eqref{eqn.qk-n+1}, \eqref{eqn.c-lower},
and the choice $A=2\wt  A$,
the hypotheses of Lemma \ref{lem.convave-pro} are satisfied.
Thus there exists $K=K(N,k,\d,a)\geq1$ such that
\begin{align}
-\frac{\p_\Xi^2q^*(\L)}{q^*(\L)}
+Kc\frb{\frac{\p_\Xi q^*(\L)}{q^*(\L)}}^2
&+\sum_{i=m+1}^{n+1}\frac{2\p_iq^*(\L)\Xi_i^2}{(\L_{\max}-\L_i)q^*(\L)}
\nonumber\\
&\quad\geq(1+\d)\frac{\p_1q^*(\L)\Xi_1^2}{\L_{\max}q^*(\L)}
\label{eqn.concave-n+1}
\end{align}
for every such $\Xi$.

Next, we simplify the above inequality.
Differentiating \eqref{eqn.qk-n+1} with respect to $\l_i$ gives
\begin{equation}\label{eqn.qk-N-i}
\p_iq^*(\L)
=\frb{1-\p_{n+1}q^*(\L)}\frac{F_i}{2\sqrt{F}}
=c\frac{F_i}{2\sqrt{F}},
\quad\text{for all}\ i\in\Z_{[1,n]}.
\end{equation}
Moreover, differentiating
\[
q^*\frb{\l,\sqrt{F(\l)}}=\sqrt{F(\l)}
\]
once and twice in the direction $\xi$,
we have
\begin{gather}
\p_\Xi q^*(\L)
=\frac{\p_\xi F}{2\sqrt F},
\label{eqn.qk-xi-N-i}
\\
\p_\Xi^2q^*(\L)
+\p_{n+1}q^*(\L)\p_\xi\frb{\frac{\p_\xi F}{2\sqrt F}}
=\p_\xi\frb{\frac{\p_\xi F}{2\sqrt F}}.
\nonumber
\end{gather}
By $\p_\xi\frb{\frac{\p_\xi F}{2\sqrt F}}
=\frac{\p_\xi^2F}{2\sqrt F}-\frac{(\p_\xi F)^2}{4F^{3/2}}$,
it follows that
\begin{equation}\label{eqn.qk-xi-N}
\p_\Xi^2q^*(\L)
=c\frb{\frac{\p_\xi^2F}{2\sqrt F}-\frac{(\p_\xi F)^2}{4F^{3/2}}}.
\end{equation}
Multiplying the inequality \eqref{eqn.concave-n+1}
by $\frac{2F(\l)}{c}$
and using \eqref{eqn.qk-N-i}--\eqref{eqn.qk-xi-N},
we obtain
\begin{align*}
-\p_\xi^2F
+\frac{K+1}{2F}(\p_\xi F)^2
+\sum_{i=m+1}^n
\frac{2F_i\xi_i^2}{\l_{\max}-\l_i}
&+\frac{\p_{n+1}q^*(\L)(\p_\xi F)^2}{c\frb{\l_{\max}-\sqrt F}\sqrt F}
\geq(1+\d)\frac{F_1\xi_1^2}{\l_{\max}}.
\end{align*}
Since $\l_{\max}\geq A\sqrt F\geq2\sqrt F$,
we have $\l_{\max}-\sqrt F\geq\frac12\l_{\max}$.
Combining this with $\p_{n+1}q^*(\L)\in[0,1)$ and \eqref{eqn.c-lower} gives
\[
0
\leq\frac{\p_{n+1}q^*(\L)}{1-\p_{n+1}q^*(\L)}
\frac{(\p_\xi F)^2}{\sqrt F(\l_{\max}-\sqrt F)}
\leq\frac{n+1}{2(k-1)}\frac{\l_{\max}}{\sqrt{F(\l)}}
\frac{2(\p_\xi F)^2}{\sqrt F\l_{\max}}
=\frac{n+1}{(k-1)}\frac{(\p_\xi F)^2}{F}.
\]
Therefore, after enlarging $K$ if necessary,
we obtain the inequality \eqref{eqn.quotient-concavity}.
This proves the result when $\l_n>0$.

It remains to remove the assumption $\l_n>0$.
For $\ve>0$, set $\l^\ve:=\l+\ve\mathbf 1$.
Since $\s_k(\l)>0$ and $\l\geq0$, we have $\s_l(\l)>0$.
Thus $F$, $DF$, and $D^2F$ are continuous near $\l$.
Recall that the preceding argument holds with the constant $\wt A$,
whereas $A=2\wt A$ in the statement of the theorem.
Hence $\l_{\max}\geq2\wt A F(\l)^{1/(k-l)}$
gives
\[
\l_{\max}+\ve\geq\wt A F(\l^\ve)^{1/(k-l)}
\]
for all sufficiently small $\ve>0$.
Since $\l^\ve\in\G_+^n$ and the multiplicity of its largest component
is still $m$, the preceding argument applies to $\l^\ve$
with the same $\xi$ and constants independent of $\ve$.
Letting $\ve\to0$ and using the continuity of
$F$, $DF$, and $D^2F$ implies
\eqref{eqn.quotient-concavity} at $\l$.
This completes the proof.
\end{proof}

\section{The Jacobi inequality and the Hessian estimate}\label{sec.Jocabi}

In this section, for the operator $F$, we use the notation
\[
F^{ij}
:=\frac{\p F}{\p u_{ij}}(D^2u),
\quad
F^{pq,rs}
:=\frac{\p^2F}{\p u_{pq}\p u_{rs}}(D^2u).
\]
When $D^2u$ is diagonalized with eigenvalues
$\l_1,\ldots,\l_n$, we also write $F_i:=\frac{\p F}{\p\l_i}$ and $F_{ij}:=\frac{\p^2F}{\p\l_i\p\l_j}$
so that $F^{ij}=F_i\d_{ij}$.

We now combine Theorem \ref{thm.quotient-concavity}
with the standard largest-eigenvalue calculation
(see also \cite[Lemma 4.1]{Lu25})
to obtain the following inequality.

\begin{proposition}[Jacobi inequality]\label{thm.jacobi}
Let $n\geq2$, $1\leq l<k\leq n$ with
$k-l\in\{1,2\}$.
Let $f\in C^{1,1}(B_{10}\times\R)$ be a positive function.
Suppose that $u\in C^4(B_{10})$ is a convex solution of
\[
F(D^2u)
%:=\frac{\s_k(D^2u)}{\s_l(D^2u)}
=f(x,u)
\quad\textrm{in}\ B_{10}.
\]
Let $M>0$ satisfy $\|u\|_{L^\infty(B_9)}\leq M$
and set $b(x):=\log\l_{\max}(D^2u(x))$.
Then for any $\d\in(0,\frac12)$, at every point $x\in B_9$ with
\[
\l_{\max}(D^2u)\geq A\frb{\max_{\ol{B_9}\times[-M,M]}f}^{1/(k-l)},
\]
we have
\begin{equation}\label{eqn.jacobi}
F^{ij}b_{ij}\geq\d F^{ij}b_i b_j-C
\end{equation}
in the viscosity sense,
where $A$ is the constant in Theorem \ref{thm.quotient-concavity}
and
\[
C=C\frb{n,k,l,\d,\|u\|_{C^{0,1}(B_9)},\min_{\ol{B_9}\times[-M,M]}f,\|f\|_{C^{1,1}(\ol{B_9}\times[-M,M])}}>0.
\]
\end{proposition}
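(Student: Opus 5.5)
We follow the standard largest-eigenvalue computation, as in \cite[Lemma 4.1]{Lu25}. Fix $x_0\in B_9$ where the threshold holds, and rotate coordinates so that $D^2u(x_0)$ is diagonal with $u_{11}=\cdots=u_{mm}=\l_{\max}>\l_{m+1}\geq\cdots\geq\l_n\geq0$. Since $\l_{\max}$ may fail to be smooth when $m>1$, we use the usual perturbation argument. Choose a smooth function $\wt b$ touching $b$ from below at $x_0$; for example, take $\wt b=\log\wt\l$, where $\wt\l$ is the largest eigenvalue of $D^2u-\epsilon P$ with $P$ a fixed matrix separating eigenvalues, or else work with the upper-semicontinuous envelope and the standard formulas for the derivatives of multiple eigenvalues. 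In either case, at $x_0$ we have
\[
b_i=\frac{u_{11i}}{\l_{\max}},\qquad
u_{\a\b i}=0\ \text{for}\ \a\neq\b\le m,
\]
\[
b_{ii}\ \geq\ \frac{u_{11ii}}{\l_{\max}}+\sum_{p>m}\frac{2u_{1pi}^2}{\l_{\max}(\l_{\max}-\l_p)}-\frac{u_{11i}^2}{\l_{\max}^2},
\]
together with $u_{11i}=u_{22i}=\cdots=u_{mmi}$. In particular $u_{ii1}=u_{11i}$ for $i\le m$ by symmetry of third derivatives, so the vector $\xi=(u_{ii1})_i$ satisfies $\xi_1=\cdots=\xi_m$. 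This is exactly the class of directions covered by Theorem~\ref{thm.quotient-concavity}.

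Next we differentiate the equation $F(D^2u)=f(x,u)$ twice in $x_1$:
\[
F^{ii}u_{ii11}+F^{pq,rs}u_{pq1}u_{rs1}=\p_1^2\bigl(f(x,u)\bigr)\geq -C\bigl(1+u_{11}\bigr),
\]
using $f\in C^{1,1}$ and $|Du|\le C$. Dividing by $\l_{\max}$ bounds this contribution below by $-C$. The second-derivative term splits as
\[
F^{pq,rs}u_{pq1}u_{rs1}=\sum_{i,j}F_{ij}u_{ii1}u_{jj1}+\sum_{p\neq q}\frac{F_p-F_q}{\l_p-\l_q}u_{pq1}^2.
\]
The off-diagonal part is nonpositive. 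Its pairs with $p=1$, $q>m$ give $-2\sum_{q>m}\frac{F_1-F_q}{\l_{\max}-\l_q}u_{1q1}^2$. We combine this with the positive third-order terms from $b_{ii}$, namely $\sum_i F_i\sum_{p>m}\frac{2u_{1pi}^2}{\l_{\max}(\l_{\max}-\l_p)}$, and take $i=p$ there, i.e.\ the terms with $u_{1pp}$. What survives is $\sum_{p>m}\frac{2F_pu_{pp1}^2}{\l_{\max}-\l_p}$, divided by $\l_{\max}$, plus terms $\frac{2F_q u_{11q}^2}{\l_{\max}(\l_{\max}-\l_q)}$ which are useful for controlling the gradient terms with $i>m$. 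The concavity of $F$ handles the remaining off-diagonal terms in the usual way: $F_p\ge F_q$ when $\l_p\le\l_q$.

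Collecting terms, we need to show
\[
\frac{1}{\l_{\max}}\Bigl(-\sum F_{ij}\xi_i\xi_j+\sum_{p>m}\frac{2F_p\xi_p^2}{\l_{\max}-\l_p}\Bigr)
\ \geq\ (1+\d)\frac{F_1\xi_1^2}{\l_{\max}^2}+\text{(terms for }i\ne1)-C.
\]
Theorem~\ref{thm.quotient-concavity} applies because its hypothesis $\l_{\max}\geq AF^{1/(k-l)}$ follows from the assumed lower bound on $\l_{\max}$. It gives exactly the $i=1$ part, up to the error $\frac{K}{F}(\sum F_i\xi_i)^2$. Since $\sum F_i\xi_i=\p_1(f(x,u))$ is bounded by $C(1+\l_{\max})$ and is divided by $F\ge\min f$, this error is at most $C\l_{\max}$ after multiplication, hence $O(1)$ after dividing by $\l_{\max}$... more precisely, $\sum F_iu_{ii1}=f_{x_1}+f_uu_1$ is bounded outright, so the error is harmless.

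The gradient term satisfies $F^{ii}b_i^2=\sum_iF_i u_{11i}^2/\l_{\max}^2$. For $i=1,\dots,m$ it is absorbed by the theorem's $(1+\d)$ factor: the $-u_{11i}^2/\l_{\max}^2$ in $b_{ii}$ costs $1$, and $\d$ remains. For $i>m$ it is absorbed by the leftover $\frac{2F_iu_{11i}^2}{\l_{\max}(\l_{\max}-\l_i)}\geq \frac{2F_iu_{11i}^2}{\l_{\max}^2}$, which exceeds $(1+\d)F_iu_{11i}^2/\l_{\max}^2$. The main obstacle is the careful bookkeeping of the off-diagonal third-derivative terms $u_{1pi}$. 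These must be arranged so that each term $u_{pp1}$ with $p>m$ feeds the theorem's $\sum_{i>m}$ term, while each term $u_{11i}$ with $i>m$ covers the gradient, without double counting. Since $u_{1pp}=u_{pp1}$, both uses draw on the same family of terms $\frac{F_p}{\l_{\max}-\l_p}$, arising once from the $F^{pp}$ weight and once from the $F^{11}$ weight, and the proof must verify that these two sources are genuinely distinct.
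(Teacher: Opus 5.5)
Your outline follows the paper's argument: the multiple-eigenvalue formulas for $\lambda_{\max}$, the equation differentiated twice in $x_1$, the splitting of $F^{pq,rs}$, Theorem~\ref{thm.quotient-concavity} applied to $\xi=(u_{ii1})$, and absorption of $F^{ii}b_i^2$. However, the step you call the main obstacle is left open, and the mechanism you describe does not produce the survivors you list.

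The pairs $(1,q),(q,1)$, $q>m$, of the off-diagonal part contribute $\frac{2}{\lambda_1}\sum_{q>m}\frac{F_q-F_1}{\lambda_1-\lambda_q}u_{11q}^2$ to the lower bound for $F^{ij}b_{ij}$. If you add only the $i=p$ terms of $\sum_iF_i\sum_{p>m}\frac{2u_{1pi}^2}{\lambda_1(\lambda_1-\lambda_p)}$ (these are the $F_p\xi_p^2$ terms), the coefficient of $u_{11q}^2$ is $F_q-F_1$, not $F_q$. So your later absorption of $F_qb_q^2$ via $\frac{2F_q}{\lambda_1(\lambda_1-\lambda_q)}\ge\frac{2F_q}{\lambda_1^2}$ is not justified.

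The missing piece is the $i=1$ term of the same double sum, $\frac{2F_1u_{1p1}^2}{\lambda_1(\lambda_1-\lambda_p)}=\frac{2F_1u_{11p}^2}{\lambda_1(\lambda_1-\lambda_p)}$, which cancels the $-F_1$ exactly. For each $p>m$ the index pairs $(i,p)=(1,p)$ and $(p,p)$ are different because $p\neq 1$, so nothing is counted twice. One obtains
\[
F^{ij}b_{ij}\ge\frac{1}{\lambda_1}\Bigl(-\sum_{i,j}F_{ij}\xi_i\xi_j+\sum_{p>m}\frac{2F_p\xi_p^2}{\lambda_1-\lambda_p}\Bigr)+\sum_{p>m}\frac{2F_pu_{11p}^2}{\lambda_1(\lambda_1-\lambda_p)}-\sum_i\frac{F_iu_{11i}^2}{\lambda_1^2}-C .
\]
Thus the weight $F^{11}$ does not create an $F_p$-term. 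It only removes $-F_1$ from $F^{1p,p1}$, and the $F_p$ in front of $u_{11p}^2$ comes from $F^{1p,p1}$ itself. Also, for $k-l=2$ the sign of $\frac{F_p-F_q}{\lambda_p-\lambda_q}$ comes from the concavity of $F^{1/2}$, not of $F$.

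Second, your viscosity set-up points the wrong way. The inequality is a subsolution statement, so it is tested by smooth $\varphi$ touching $b$ from above at $x_0$. It is exactly this upper touching (Lemma~5 of Brendle--Choi--Daskalopoulos, as used in the paper) that gives $u_{\alpha\beta i}=\delta_{\alpha\beta}\lambda_1\varphi_i$ for $\alpha,\beta\le m$, together with the second-order inequality.

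A lower barrier such as $\log\lambda_{\max}(D^2u-\epsilon P)$ gives $\widetilde b_i=u_{11i}/\lambda_1$ and a Hessian bound, but none of the multiplicity relations. Those relations genuinely fail at points with no upper test function, yet they are the only reason $\xi_1=\cdots=\xi_m$ holds, which is what makes Theorem~\ref{thm.quotient-concavity} applicable. A lower barrier can be used only after fixing an upper test function (which then touches the barrier from above) and passing to the limit $\epsilon\to0$.

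Finally, $u_{ii1}=u_{11i}$ is false for $2\le i\le m$: there $u_{11i}=u_{1i1}=0$, while $u_{ii1}=u_{111}$. The correct reason for $\xi_1=\cdots=\xi_m$ is the case $i=1$ of $u_{11i}=\cdots=u_{mmi}$. For the same reason the gradient terms with $2\le i\le m$ vanish; they are not absorbed by the factor $1+\delta$, which covers only $i=1$.
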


\begin{proof}
Fix $x_0\in B_9$ such that
\[
\l_{\max}(D^2u(x_0))
\geq
A\frb{\max_{B_9\times[-M,M]}f}^{1/(k-l)}.
\]
After a rotation of coordinates, we may assume that
\[
D^2u(x_0)=\op{diag}(\l_1,\ldots,\l_n),
\qquad
\l_1=\cdots=\l_m=\l_{\max}>
\l_{m+1}\geq\cdots\geq\l_n\geq0.
\]
All quantities below are evaluated at $x_0$.
By \cite[Lemma~5]{BCD17}, it follows that
\begin{gather}
\d_{k\ell}(\l_1)_i=u_{k\ell i},
\quad\text{for all}\
k,\ell\in\Z_{[1,m]},\ i\in\Z_{[1,n]}
\label{eqn.lambda-max-derivative}
\\
(\l_1)_{ii}
\geq u_{11ii}+2\sum_{p>m}\frac{u_{1pi}^2}{\l_1-\l_p}
\quad\text{for all}\ i\in\Z_{[1,n]}
\label{eqn.lambda-max-second}
\end{gather}
in the viscosity sense.
Moreover, we can compute
\begin{gather*}
u_{11i}=\l_1b_i\quad\text{for all}\ i\in\Z_{[m+1,n]},\
\\
b_{ii}
\geq\frac{u_{11ii}}{\l_1}
+2\sum_{p>m}\frac{u_{1pi}^2}{\l_1(\l_1-\l_p)}-\frac{u_{11i}^2}{\l_1^2}
\quad\
\text{for all}\ i\in\Z_{[1,n]}
\end{gather*}
in the viscosity sense.
Hence
\[
F^{ij}b_{ij}
\geq\frac{1}{\l_1}\sum_{i=1}^nF_i u_{11ii}
+2\sum_{i=1}^n\sum_{p>m}\frac{F_i u_{1pi}^2}{\l_1(\l_1-\l_p)}
-\sum_{i=1}^n\frac{F_i u_{11i}^2}{\l_1^2}.
\]
Differentiating the equation $F(D^2u)=f(x,u)$
twice in the $x_1$-direction gives
\[
\sum_{i=1}^nF_i u_{ii11}
+\sum_{p,q,r,s=1}^nF^{pq,rs}u_{pq1}u_{rs1}
=\frb{f(x,u)}_{11}
\geq-C\l_1-C.
\]
%$\frb{f(x,u)}_{11}=f_{x_1x_1}+2f_{x_1u}u_1+f_{uu}u_1^2+f_u u_{11}$,
Thus
\begin{equation}\label{eqn.jacobi-second-lower}
F^{ij}b_{ij}
\geq-\frac{1}{\l_1}\sum_{p,q,r,s=1}^nF^{pq,rs}u_{pq1}u_{rs1}
+2\sum_{i=1}^n\sum_{p>m}\frac{F_i u_{1pi}^2}{\l_1(\l_1-\l_p)}
-\sum_{i=1}^n\frac{F_i u_{11i}^2}{\l_1^2}-C.
\end{equation}
Set $\xi_i:=u_{ii1}$ for all $i\in\Z_{[1,n]}$.
By the standard second-derivative formula for symmetric functions
of the eigenvalues, together with the concavity of $F^{1/(k-l)}$
(see \cite[Lemma~2.2]{LT26} and \cite[Theorem~2.5]{HS99}), we have
\begin{equation}\label{eqn.spectral-expansion-jacobi}
-\sum_{p,q,r,s=1}^n
F^{pq,rs}u_{pq1}u_{rs1}
=-\sum_{i,j=1}^nF_{ij}\xi_i\xi_j-\sum_{i\neq j}F^{ij,ji}u_{ij1}^2,
\end{equation}
and
\begin{equation}\label{eqn.third-derivative-lower}
-\sum_{i\neq j}F^{ij,ji}u_{ij1}^2
\geq-2\sum_{p>m}F^{1p,p1}u_{1p1}^2
=2\sum_{p>m}\frac{F_p-F_1}{\l_1-\l_p}u_{11p}^2.
\end{equation}
On the other hand,
\begin{equation}\label{eqn.third-derivative-lower-jacobi}
2\sum_{i=1}^n\sum_{p>m}
\frac{F_i u_{1pi}^2}{\l_1(\l_1-\l_p)}
\geq2\sum_{p>m}\frac{F_1u_{11p}^2+F_pu_{1pp}^2}{\l_1(\l_1-\l_p)}
=2\sum_{p>m}\frac{F_1u_{11p}^2+F_p\xi_p^2}{\l_1(\l_1-\l_p)}.
\end{equation}
Substituting
\eqref{eqn.spectral-expansion-jacobi}--%
\eqref{eqn.third-derivative-lower-jacobi}
into \eqref{eqn.jacobi-second-lower}, we find
\begin{align*}
F^{ij}b_{ij}
&\geq\frac{1}{\l_1}
\frb{-\sum_{i,j=1}^nF_{ij}\xi_i\xi_j+2\sum_{p>m}\frac{F_p-F_1}{\l_1-\l_p}u_{11p}^2}
+2\sum_{p>m}\frac{F_1u_{11p}^2+F_p\xi_p^2}{\l_1(\l_1-\l_p)}
-\sum_{i=1}^n\frac{F_i u_{11i}^2}{\l_1^2}-C.
\\
&=\frac{1}{\l_1}
\frb{-\sum_{i,j=1}^nF_{ij}\xi_i\xi_j+\sum_{p>m}\frac{2F_p\xi_p^2}{\l_1-\l_p}}
+2\sum_{p>m}\frac{F_p u_{11p}^2}{\l_1(\l_1-\l_p)}-\sum_{i=1}^n\frac{F_i u_{11i}^2}{\l_1^2}-C.
\end{align*}
Next, by \eqref{eqn.lambda-max-derivative}, it follows that
\begin{gather*}
\xi_1=\xi_2=\cdots=\xi_m=u_{111}=(\l_1)_1=\l_1b_1,\
\\
\l_1b_i=u_{11i}=u_{1i1}=0\
\text{for all}\ i\in\Z_{[2,m]}.
\end{gather*}
Moreover, since
\[
\l_1
\geq A\frb{\max_{B_9\times[-M,M]}f}^{1/(k-l)}
\geq A F(\l)^{1/(k-l)},
\]
Theorem \ref{thm.quotient-concavity}
and $\abs{\sum_{i=1}^nF_i\xi_i}=\abs{\frb{f(x,u)}_1}=\abs{f_{x_1}+f_u u_1}\leq C$ give
\[
-\sum_{i,j=1}^nF_{ij}\xi_i\xi_j
+\sum_{i>m}\frac{2F_i\xi_i^2}{\l_1-\l_i}
\geq(1+\d)\frac{F_1\xi_1^2}{\l_1}
-\frac{K}{F}\frb{\sum_{i=1}^nF_i\xi_i}^2
\geq(1+\d)\frac{F_1\xi_1^2}{\l_1}-C\l_1,
\]
where in the last inequality we used $F\geq\min f>0$ and $\l_1\geq c>0$.
Consequently,
\begin{align*}
F^{ij}b_{ij}
&\geq(1+\d)F_1b_1^2
+2\sum_{i>m}\frac{\l_1F_i}{\l_1-\l_i}b_i^2
-\sum_{i=1}^nF_i b_i^2-C
\\
&\geq\d F_1b_1^2+\sum_{i>m}F_i b_i^2-C
\geq\d F^{ij}b_i b_j-C.
\end{align*}
This proves \eqref{eqn.jacobi} in the viscosity sense.
\end{proof}

\medskip

With the Jacobi inequality established,
we can now complete the proof of the interior Hessian estimate
by following the framework of \cite{LT26}.
For completeness, we record the main estimates,
while the detailed arguments are those of \cite{LT26}.

\begin{proof}[Proof of Theorem \ref{thm.main}]
Fix $\d=\frac14$, and let $A$ be the corresponding constant in
Theorem \ref{thm.quotient-concavity}. Set
\[
\ol\l
:=
\max\set{
1,
A\frb{\max_{B_9\times[-M,M]}f}^{1/(k-l)}
}
\]
and
\[
b:=\log\max\set{\l_{\max}(D^2u),\ol\l}.
\]
By Proposition \ref{thm.jacobi} and the stability of viscosity
subsolutions under taking a maximum,
\begin{equation}\label{eqn.global-jacobi}
F^{ij}b_{ij}
\geq
\d F^{ij}b_i b_j-C
\end{equation}
in the viscosity sense in $B_9$.
We note that, although \cite[Lemmas~3.1, 3.2 and 4.1]{LT26}
are stated for $n\geq3$, their proofs remain valid when $n=2$,
where necessarily $(k,l)=(2,1)$.
We may therefore apply the Legendre transform argument in
\cite[Lemma~3.1]{LT26} to \eqref{eqn.global-jacobi}.
Let $w$ be the Legendre transform of
$u+\frac12\abs{x}^2$, and set
\[
G(D^2w)
:=
-F\frb{-I+(D^2w)^{-1}},
\quad
b^*(y):=b(x(y)).
\]
Then
\[
\sum_iG^{ii}b_{ii}^*\geq-C
\]
in the viscosity sense.
The local maximum principle in \cite[Lemma~3.2]{LT26},
followed by the change of variables from $y$ to $x$, gives
\begin{equation}\label{eqn.mean-value-b}
b(0)
\leq
C\int_{B_1}
b\,\s_{k-1}(D^2u)d x+C.
\end{equation}

We next follow the integration by parts argument in Section~4 of
\cite{LT26}. Define
\[
H^{ij}
:=
\s_lF^{ij}
=
\s_k^{ij}-f\s_l^{ij}.
\]
The estimate in \cite[Lemma~4.1]{LT26} applies almost everywhere
to the function $b$, since $Db=0$ almost everywhere on $\set{\l_{\max}\leq\ol\l}$.
Repeating the argument leading to \cite[(4.4)]{LT26}, we obtain
\begin{equation}\label{eqn.pre-energy-bound}
b(0)
\leq
C\int_{B_3}H^{ij}b_i b_jd x
+
C\int_{B_3}\s_{k-1}(D^2u)d x
+C.
\end{equation}
Multiplying \eqref{eqn.global-jacobi} by $\s_l$ gives
\[
H^{ij}b_{ij}
\geq\d H^{ij}b_i b_j-C\s_l.
\]
If $l=k-1$, then $\s_l=\s_{k-1}$.
If $l=k-2$, the Newton--Maclaurin inequality and
$\s_k=f\s_{k-2}$ imply $\s_{k-2}\leq C\s_{k-1}$.
Thus, in either case,
\begin{equation}\label{eqn.weighted-jacobi}
H^{ij}b_{ij}
\geq
\d H^{ij}b_i b_j-C\s_{k-1}.
\end{equation}
As in the proof of \cite[Theorem~1.2]{LT26},
\eqref{eqn.weighted-jacobi} also holds in the distributional sense.
Testing it with a cutoff supported in $B_4$
and using \cite[Lemma~4.1]{LT26}, we obtain
\begin{equation}\label{eqn.energy-bound}
\int_{B_3}H^{ij}b_i b_jd x
\leq
C\int_{B_4}\s_{k-1}(D^2u)d x.
\end{equation}
Combining \eqref{eqn.pre-energy-bound} and
\eqref{eqn.energy-bound} yields
\begin{equation}\label{eqn.b-sigma-k-1}
b(0)
\leq
C\int_{B_4}\s_{k-1}(D^2u)d x+C.
\end{equation}

Finally, repeating the integration by parts argument at the end
of the proof of \cite[Theorem~1.2]{LT26}, we have
\[
\int_{B_4}\s_{k-1}(D^2u)d x
\leq
C\int_{B_6}\s_1(D^2u)d x+C
\leq C,
\]
where the last inequality follows from the divergence theorem
and the bound for $Du$.
Hence $b(0)\leq C$, and therefore $\l_{\max}(D^2u(0))\leq e^{b(0)}\leq C$.
Since $u$ is convex, it follows that $\|D^2u(0)\|\leq C$.
This completes the proof.
\end{proof}

\section{A Liouville theorem}\label{sec.Liouville}

In this section, we prove Theorem \ref{thm.liouville}.
The quadratic growth assumption is used first to show that the gradient map
is onto and then, after taking the Legendre transform,
to obtain a uniform quadratic upper bound for the dual function.

\begin{proof}[Proof of Theorem \ref{thm.liouville}]
After increasing $c$ if necessary, we may assume that
\begin{equation}\label{eqn.global-quadratic-lower}
u(x)\geq a|x|^2-c
\quad\text{for all}\ x\in\R^n.
\end{equation}
Set $d:=k-l$.
Since $u$ is convex and $\s_k/\s_l=1$, we have
\[
\l(D^2u)\in\G_k:=\set{\l\in\R^n:\s_j(\l)>0,\ 1\leq j\leq k}.
\]
Let
\[
L(A):=\frb{\frac{\s_k(A)}{\s_l(A)}}^{1/d}.
\]
Then $L$ is elliptic and concave on $\G_k$.
For every positive definite matrix $A$, the reciprocal identity gives
\[
L(A^{-1})
=\frb{\frac{\s_{n-k}(A)}{\s_{n-l}(A)}}^{1/d}
=\frb{\frac{\s_{n-l}(A)}{\s_{n-k}(A)}}^{-1/d}.
\]
Since $\frb{\frac{\s_{n-l}}{\s_{n-k}}}^{1/d}$ is positive and concave on the positive cone,
its reciprocal is convex.
Hence $A\mapsto L(A^{-1})$ is locally convex.
The constant-rank theorem of Bian and Guan \cite[Theorem~1.1]{BG09},
applied to $L(D^2u)=1$,
then shows that $D^2u$ has constant rank in $\R^n$.

For any $p\in\R^n$, \eqref{eqn.global-quadratic-lower}
gives $u(x)-p\cdot x\to+\infty$ as $|x|\to+\infty$.
Thus $u-p\cdot x$ attains its minimum, and hence
$p\in Du(\R^n)$.
Therefore, $Du:\R^n\to\R^n$ is onto.
If the rank of $D^2u$ were smaller than $n$,
then every point would be a critical point of $Du$.
By Sard's theorem, $Du(B_j)$ would have measure zero for every
$j\in\Z_{>0}$, contradicting $\R^n=Du(\R^n)=\bigcup_{j=1}^{\infty}Du(B_j)$.
Therefore,
\begin{equation}\label{eqn.strict-convexity-entire}
D^2u>0
\quad\text{in}\ \R^n.
\end{equation}
In particular, $Du$ is strictly monotone and hence one-to-one.
Together with its surjectivity and the inverse function theorem,
$Du$ is a global $C^3$ diffeomorphism of $\R^n$.

By \eqref{eqn.global-quadratic-lower}, $u$ is coercive.
Since $u$ is strictly convex, it has a unique minimum.
After a translation and subtracting a constant,
we may assume that
\[
u(0)=0,
\qquad
Du(0)=0,
\qquad
u\geq0
\quad\text{in}\ \R^n.
\]
The estimate \eqref{eqn.global-quadratic-lower} remains valid,
with possibly different constants $a>0$ and $c\geq0$,
which we continue to denote by $a$ and $c$.

Let $u^*$ be the Legendre transform of $u$,
\[
u^*(y):=x\cdot y-u(x),
\quad
y=Du(x).
\]
Then $u^*\in C^4(\R^n)$ is strictly convex and
\[
Du^*(y)=x,
\quad
D^2u^*(y)=\frb{D^2u(x)}^{-1}.
\]
By the reciprocal identity,
\begin{equation}\label{eqn.dual-quotient}
\frac{\s_{n-l}(D^2u^*)}{\s_{n-k}(D^2u^*)}=1
\quad\text{in}\ \R^n.
\end{equation}
Moreover, \eqref{eqn.global-quadratic-lower} gives
\begin{equation}\label{eqn.dual-quadratic-upper}
0\leq u^*(y)
\leq c+\frac{|y|^2}{4a}
\quad\text{for all}\ y\in\R^n.
\end{equation}

We next obtain a global upper bound for $D^2u^*$.
Fix $R\geq1$ and $x_0\in B_R$.
Let $\rho=1/20$ and define
\[
v_{R,x_0}(z)
:=\frac{u^*(x_0+\rho Rz)}{(\rho R)^2},
\quad\text{for any}\
z\in B_{10}.
\]
Then $v_{R,x_0}$ satisfies \eqref{eqn.dual-quotient}.
Moreover, by \eqref{eqn.dual-quadratic-upper} and convexity,
\[
\|v_{R,x_0}\|_{C^{0,1}(B_9)}\leq C.
\]
If $n-k\geq1$, Theorem \ref{thm.main} gives $|D^2u^*(x_0)|=|D^2v_{R,x_0}(0)|\leq C$.
If $n-k=0$, then $n-l=d\in\set{1,2}$.
For $n-l=1$, \eqref{eqn.dual-quotient} becomes $\t v_{R,x_0}=1$.
Since $v_{R,x_0}$ is convex, $0\leq D^2v_{R,x_0}\leq I$.
For $n-l=2$, the same estimate follows from the interior Hessian estimate in \cite{M21}.
Since $R\geq1$ and $x_0\in B_R$ are arbitrary, we obtain
\begin{equation}\label{eqn.global-dual-C2}
\|D^2u^*\|_{L^\infty(\R^n)}\leq C.
\end{equation}

It remains to verify the uniform ellipticity of
\eqref{eqn.dual-quotient}.
Let $\mu=(\mu_i):=\l(D^2u^*)$ with $\mu_1\geq\mu_2\geq\cdots\geq\mu_n>0$.
By \eqref{eqn.global-dual-C2}, $\mu_1\leq C$.
If $d=1$, then $q_{n-l}(\mu)=1$.
If $d=2$, then
\[
q_{n-l}(\mu)q_{n-l-1}(\mu)=\frac{\s_{n-l}(\mu)}{\s_{n-l-2}(\mu)}=1.
\]
By Lemma \ref{lem.lambda-k-bounded},
it follows that $q_{n-l-1}(\mu)\leq(l+2)\mu_{n-l-1}\leq C$,
and hence $q_{n-l}(\mu)\geq C^{-1}$.
Therefore, in either case,
\[
\mu_{n-l}
\geq\frac{q_{n-l}(\mu)}{l+1}\geq C^{-1}.
\]
Together with \eqref{eqn.global-dual-C2}, this shows that
$\l(D^2u^*)$ remains in a compact subset of $\G_{n-l}$.
Consequently, $\frb{\frac{\s_{n-l}}{\s_{n-k}}}^{1/d}$ is uniformly elliptic along $u^*$.

For $R\geq1$, set $v_R(z):=\frac{u^*(Rz)}{R^2}$ for any $z\in B_2$.
Then $v_R$ satisfies \eqref{eqn.dual-quotient}.
By \eqref{eqn.dual-quadratic-upper},
\eqref{eqn.global-dual-C2}, and the uniform ellipticity established above,
the Evans--Krylov estimate \cite{E82,K82} gives
some $\a\in(0,1)$ such that
\[
R^\a[D^2u^*]_{C^\a(B_R)}
=[D^2v_R]_{C^\a(B_1)}
\leq C.
\]
For any $y,z\in\R^n$, choosing $R$ sufficiently large so that
$y,z\in B_R$ and then letting $R\to+\infty$, we obtain
\[
|D^2u^*(y)-D^2u^*(z)|
\leq CR^{-\a}|y-z|^\a\to0.
\]
Hence $D^2u^*$ is constant in $\R^n$,
and thus $u^*$ is a quadratic polynomial, and so is $u$.
\end{proof}

\subsection*{Acknowledgments}

This work was partially supported by NSFC 12171389 and NSFC 11801015.

\renewcommand\refname{References}

\end{document}